\documentclass[a4paper]{amsart}
\usepackage{url}
\usepackage{amsmath}
\usepackage[makeroom]{cancel}

\usepackage{graphicx}
\usepackage[all]{xy}
\usepackage[mathscr]{eucal}
\usepackage{amsmath,amssymb,amsfonts}

\newtheorem{theorem}{Theorem}[section]
\newtheorem{lemma}[theorem]{Lemma}

\newtheorem{example}[theorem]{Example}
\newtheorem{examples}[theorem]{Examples}
\newtheorem{proposition}[theorem]{Proposition}
\newtheorem{remark}[theorem]{Remark}

\newtheorem{question}[theorem]{Question}

\usepackage{stackengine}
\usepackage{xcolor}
\usepackage[all]{xy}

\title{Finitary Cartesian closed varieties and semigroup actions}

\author{Mark V. Lawson}
\address{Mark V. Lawson, Department of Mathematics,
Maxwell Institute for Mathematical Sciences,
Heriot-Watt University,
Riccarton,
Edinburgh EH14 4AS,
UNITED KINGDOM}
\email{m.v.lawson@hw.ac.uk}

\begin{document}
\dedicatory{This paper is dedicated to the memory of Philip Scott, friend and colleague.}

\begin{abstract}
We build on some ideas of Richard Garner.
Let $M$ be a monoid and $B$ a Boolean algebra.
A `matched pair' $[B|M]$ consists of $B$ and $M$ and some mutual interactions.
Garner showed that every such matched pair determines (what we shall call)
a Boolean left restriction monoid $S = S[B|M]$.
In this paper, we show that the data of a $[B|M]$-set (defined later)
may be encoded by means of a certain kind of action by $S$.
This means that the category $[B|M]$-{\bf sets} is equivalent to
a category of {\bf $S$-actions}.
We deduce, as a result of Garner's work, that every non-degenerate finitary Cartesian closed variety 
is equivalent to a special category of $S$-actions where $S$ is a Boolean left restriction monoid.

\end{abstract}
\maketitle

\section{Introduction}

This paper builds on some excellent work of Richard Garner  \cite{G2024, G2025}.
Before we explain what we actually do, we describe our point of view; this will serve to orientate the reader.

Semigroup theory is, of course, defined as the study of associative binary operations.
In practice, however, it can often be regarded as the {\em algebraic} study of functions, to be contrasted with
category theory and the theory of computable functions.
The subtitle of \cite{JS} describes what we have in mind.
Just as with category theory, the word `function' should be interpreted broadly.
Thus, inverse semigroup theory is the abstract theory of {\em partial} bijections.
Similarly, the left restriction monoids, which play a starring role in this paper,
should be viewed as the abstract theory of partial functions along with their domains of definition;
to be precise, you should think in terms of partial functions as being composed from left-to-right.
In addition, we shall need a class of left actions of tubesuch monoids.
As we shall see, you can regard such actions as being on presheaves of sets. 

In Section~4, we prove that a category consisting of a single Boolean left restriction monoid $S$
acting on supported sets
is equivalent to the category 
$[\mathsf{Proj}(S)|\mathsf{Tot}(S)]$-{\bf set}
(we shall explain all terminology and notation later).
However, Garner \cite{G2024, G2025} proved that 
every non-degnerate finitary Cartesian closed variety is isomorphic to one of the form {\bf $[B|M]$-sets}
where $B$ is a Boolean algebra and $M$ is a monoid.
It will therefore follow from our work that every Cartesian closed variety 
is equivalent to a category consisting of a single Boolean left restriction monoid $S$
acting on supported sets.

Why should you care? The groups of units of what are termed `\'etale Boolean left restriction monoids'
are topological full groups of the `ample groupoids' within the theory of $C^{\ast}$-algebras.
Such groups can be seen as generalizations of the Thompson groups, which were originally constructed from the $\lambda$-calculus
(whose categorical incarnation is the the theory of Cartesian closed categories).

The reference \cite{L1988} will not actually be needed to read this paper,
but I have included it since Phil Scott would have been amused to see a connection between
Cartesian closed categories and semigroup theory.

The paper proceeds as follows.
In Section~2,
we describe a class of left restriction monoids that we call `factorizable'
in terms of `matched pairs' $[E|M]$ where here $E$ is a meet semilattice with a top element
and $M$ is a monoid. 
It will turn out that Boolean left restriction monoids are always factorizable.
In Section~3,
we introduce a class of actions of left restriction monoids that we term `supported';
this generalizes the \'etale actions of inverse monoids.
We also describe a category obtained from $[E|M]$ that we call the category of $[E|M]$-{\bf sets}.
We then fix a left restriction monoid $S$ and prove that 
a class of supported actions of $S$, that we also term `factorizable', is equivalent to the category of $[E|M]$-{\bf sets}.
In Section~4, we show how the above theory works out when we specialize to  Boolean
left restriction monoids; it is here that we meet Garner's work again.
In Section~5, we describe the results of Section 4 in the case of those Boolean left restriction monoids which are \'etale.
Thus we proceed from the general to the specific.\\

\noindent
{\bf Terminology and notation}\\
The word `semilattice' will mean a meet semilattice with a top element denoted by 1.
The meet operation will be denoted by concatenation.
We assume the reader is familar with Boolean algebras.
Furthermore,  we shall assume that $0 \neq 1$ (so, our Boolean algebras are `non-degenerate').
The complement of $e$ will be denoted by $\bar{e}$,
the meet of $e$ and $f$ by $ef$
and the join of $e$ and $f$ by $e + f$.
I shall use the word `homomorphism' to mean any structure-preserving function.
Thus homomorphisms of monoids will be required to preserve the identity
and homomorphisms of semigroups-with-zeros will be required to preserve the zero.\\

\noindent
{\bf Acknowledgements }The author would like to thank the Isaac Newton Institute for Mathematical Science, Cambridge,
for support and hospitality during the programme {\em Topological groupoids and $C^{\ast}$-algebras},
where work on this paper was undertaken.
This work was supported by EPSRC grant EP/V521929/1.
The author would particularly like to thank
Ying-Fen Lin (Queen's, Belfast) for the invitation
as well as the other organizers
Astrid an Huef (Victoria, Wellington),
Nadia Larsen (Oslo),
Xin Lin (Glasgow),
and
Stuart White (Oxford).\\

\section{The structure of factorizable  left restriction monoids}

The goal of this section is to describe the structure of what we call `factorizable left restriction monoids'
in terms of semilattices and monoids using ideas to be found in \cite{G2025}.
It will transpire that the Boolean left restriction monoids, which we are really interested in,
are automatically factorizable in our sense.

Define a monoid $S$ to be a {\em left restriction monoid} if
it is equipped with a unary operation $s \mapsto s^{+}$, called {\em plus}, satisfying the following axioms:
\begin{itemize}
\item[{\rm (LR1).}] $(s^{+})^{+} = s^{+}$. 
\item[{\rm (LR2).}] $(s^{+}t^{+})^{+} = s^{+}t^{+}$.
\item[{\rm (LR3).}] $s^{+}t^{+} = t^{+}s^{+}$.
\item[{\rm (LR4).}] $s^{+}s = s$.
\item[{\rm (LR5).}] $(st)^{+} = (st^{+})^{+}$.
\item[{\rm (LR6).}] $st^{+} = (st)^{+}s$.
\end{itemize}
See my paper \cite{L2025} for the origins of these axioms (or, rather, their left-right duals)
and the basic properties that follow.

Denote by $\mathsf{Proj}(S)$ those elements $a$ such that $a^{+} = a$, called {\em projections}.
This set is a meet semilattice with top element (in other words, it is a monoid under meet).
Those elements $a \in S$ such that $a^{+} = 1$ are called  {\em total elements};
the set of total elements of $S$ is denoted by $\mathsf{Tot}(S)$.
This is a monoid by axiom (LR5).

Define a partial order on a left restriction monoid by $s \leq t$ iff $s = s^{+}t$.
We call this the {\em natural partial order} and it will be the only order we shall consider
on a left restriction monoid.
It can be shown that $s \leq t$ iff $s = et$ where $e$ is a projection.
If $s \leq t$ then $s^{+} \leq t^{+}$;
we shall use this result many times.

Let $S$ and $T$ be left restriction monoids.
A {\em homomorphism} $\theta \colon S \rightarrow T$ of left restriction monoids
is a monoid homomorphism from $S$ to $T$ such that $\theta (a^{+}) = \theta (a)^{+}$.
Such homomorphisms map projections to projections.

We shall now define a structure form any left restriction monoid.

Put $M = \mathsf{Tot}(S)$ and $E = \mathsf{Proj}(S)$.
We shall denote the product in $M$ by concatenation as well as the meet in $E$.
Define a function $M \times E \rightarrow E$ by $(m,e) \mapsto m \ast e = (me)^{+}$.
If $e \in E$ define $\equiv_{e}$ on $M$ by $m \equiv_{e} n$ iff $em = en$.
You can think of this as being the replacement of an action of $E$ on $M$ which does not work.
It is routine to check (but see the lemma below) that the following axioms hold:
\begin{itemize}
\item[{\rm (MP1).}] The monoid $M$ acts on the set $E$. We denote this action by $(m,e) \mapsto m \ast e$.
\item[{\rm (MP2).}] $m \ast 1 = 1$ for all $m \in M$.
\item[{\rm (MP3).}] $m \ast (ef) = (m \ast e)(m \ast f)$.
\item[{\rm (MP4).}] $\equiv_{e}$ is a right congruence on $M$; the $\equiv_{e}$-class containing the element $m$ will be denoted by $[m]_{e}$.
\item[{\rm (MP5).}] $\equiv_{1}$ is the identity right congruence.
\item[{\rm (MP6).}] If $f \leq e$ then $m \equiv_{e} n$ implies that $m \equiv_{f} n$.
\item[{\rm (MP7).}] If $a \equiv_{e} a'$ then $ma \equiv_{m \ast e} ma'$ for any $m \in M$.
\item[{\rm (MP8).}] If $a \equiv_{e} a'$ then $e(a \ast f) = e(a' \ast f)$ for any $f \in E$.
\end{itemize}
If, now,  $M$ is {\em any} monoid, and $E$ is {\em any} semilattice in which the above axioms hold 
then we will say that we have a {\em matched pair} denoted by $[E|M]$.\footnote{This is in a more general sense than that used by Garner.}

\begin{lemma}
The left restriction monoid $S$ gives rise to the matched pair 
$$[\mathsf{Proj}(S)|\mathsf{Tot}(S)].$$
\end{lemma}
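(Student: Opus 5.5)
We verify (MP1)--(MP8) directly from (LR1)--(LR6), using only the standard consequences recorded in \cite{L2025}: projections form a commutative idempotent submonoid with $ef=(ef)^{+}$; $e^{+}=e$ for projections; $(es)^{+} = e s^{+}$ for $e \in E$; and $s\leq t$ implies $s^{+}\leq t^{+}$. First we note that $m \ast e = (me)^{+}$ is indeed a projection by (LR1), so the map $M\times E\to E$ is well defined.

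\textbf{(MP1)--(MP3).} For $1 \ast e = e^{+} = e$. For $mn \ast e = (mne)^{+}$, apply (LR5) with $s=m$, $t=ne$ to get $(m(ne))^{+} = (m(ne)^{+})^{+} = m\ast(n\ast e)$; this gives (MP1). (MP2) is $(m1)^{+}=m^{+}=1$ since $m$ is total. For (MP3) we use (LR6) twice. Writing $me = (me)^{+}m$, we obtain
\[
mef = (me)^{+}mf = (me)^{+}(mf)^{+}m .
\]
Taking plus and using $(es)^{+}=es^{+}$ with $m^{+}=1$ gives $(mef)^{+} = (me)^{+}(mf)^{+}$.

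\textbf{Congruence axioms.} (MP4) is immediate: $\equiv_e$ is clearly an equivalence, and $em=en$ implies $emk=enk$. (MP5) holds because $1m=1n$ means $m=n$. (MP6) holds because $f\le e$ means $f=fe$, so $em=en$ implies $fm=fem=fen=fn$.

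\textbf{(MP7).} Suppose $ea=ea'$. By (LR6), $(me)^{+}m = me$, hence $(m\ast e)ma = mea = mea' = (m\ast e)ma'$, which is the required congruence.

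\textbf{(MP8).} Suppose $ea=ea'$. Then $e(a\ast f) = e(af)^{+} = (eaf)^{+}$, using $(es)^{+}=es^{+}$, and $(eaf)^{+}=(ea'f)^{+}=e(a'\ast f)$.

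The main obstacle is purely bookkeeping: the auxiliary identity $(es)^{+}=es^{+}$ for projections $e$. If it is not taken as known from \cite{L2025}, it should be derived first. From (LR5), $(es)^{+}=(es^{+})^{+}$, and $es^{+}$ is a product of projections, so (LR2) gives $(es^{+})^{+}=es^{+}$. With this identity in hand, every axiom reduces to a one- or two-line calculation.
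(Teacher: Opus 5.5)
Your proof is correct and takes the same route as the paper, which also checks (MP1)--(MP8) one at a time from (LR1)--(LR6). You use exactly the axioms the paper cites: (LR5) for (MP1), (LR2), (LR5) and (LR6) for (MP3), (LR6) for (MP7), and (LR2) with (LR5) for (MP8). You write out the calculations the paper omits and package (LR2) and (LR5) into the auxiliary identity $(es)^{+}=es^{+}$.
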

\begin{proof} We have already seen that $M = \mathsf{Tot}(S)$ is a monoid and $E = \mathsf{Proj}(S)$ is 
a meet semilattice with a top element from the definition of left restriction monoid. 
Axiom (MP1) holds: use (LR5) to establish the associativity of the action.
Axiom (MP2) holds from the very definition of $M$.
Axiom (MP3) holds: use (LR2), (LR5) and (LR6).
Axiom (MP4) holds: this is immediate from the definition.
Axiom (MP5) holds: immediate from the properties of the identity.
Axiom (MP6) holds: immediate from $f \leq e$ and properties of the binary operation in $M$.
Axiom (MP7) holds: use (LR6).
Axiom (MP8) holds: use (LR2) and (LR5).
\end{proof}

Let $[E|M]$ and $[E'|M']$ be matched pairs.
A {\em homomorphism} from $[E|M]$ to $[E'|M']$ is a pair of maps $(\alpha, \beta)$
where both $\alpha \colon M \rightarrow M'$ 
and
$\beta \colon E \rightarrow E'$ are homomorphisms,
such that $\beta (m \ast e) = \alpha (m) \ast \beta (e)$, and $m \equiv_{e} n$ implies $\alpha (m) \equiv_{\beta (e)} \alpha (n)$.
In this way, we can see that matched pairs form a category.
If they are {\em isomorphic}, we write  $[E|M] \cong [E'|M']$.

We say that a left restriction monoid $S$ is {\em factorizable} if for each $s \in S$ there is a total element $\hat{s} \in \mathsf{Tot}(S)$ such that $s \leq \hat{s}$.

\begin{remark}{\em Let $S$ be {\em any} left restriction monoid.
Put $S' = \mathsf{Tot}(S)^{\downarrow}$, the elements of $S$ below a total element in the natural partial order.
Then $S'$ is a factorizable left restriction monoid, and it contains all the projections of $S$. 
This example shows that factorizable left restriction monoids are easy to find.}
\end{remark}

What follows is a mild generalization of \cite[Proposition 3.3, Proposition 3.4, Theorem 3.5]{G2025}.

\begin{theorem}\mbox{}
\begin{enumerate} 

\item Let $[E|M]$ be a matched pair. Then we can construct a factorizable left restriction monoid $S = S[E|M]$
such that $[E|M] \cong [\mathsf{Proj}(S)|\mathsf{Tot}(S)]$.

\item Every factorizable left restriction monoid is isomorphic to a monoid of the form  $S[E|M]$.

\item The category of factorizable left restriction monoids is equivalent to the category of matched pairs. 

\end{enumerate}
\end{theorem}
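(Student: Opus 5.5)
The construction I have in mind is the obvious one suggested by the factorization $s = s^{+}\hat{s}$: elements of $S[E|M]$ should be pairs consisting of a projection and a total element modulo the ambiguity in the total part. So I would define
\[
S = S[E|M] = \{(e,[m]_{e}) : e \in E,\ m \in M\},
\]
with product $(e,[m]_{e})(f,[n]_{f}) = (e(m \ast f), [mn]_{e(m\ast f)})$ and plus $(e,[m]_{e})^{+} = (e,[1]_{e})$. The first job is well-definedness of the product. If $m \equiv_{e} m'$ then (MP8) gives $e(m\ast f) = e(m' \ast f)$, so the first component is well defined. For the second component, (MP4) (right congruence) and (MP6) handle the change $m \mapsto m'$. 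For the change $n \mapsto n'$ with $n \equiv_f n'$, (MP7) gives $mn \equiv_{m \ast f} mn'$, and then (MP6) lowers this to $e(m\ast f)$. Associativity will use (MP1) and (MP3), since $m \ast (f(n\ast g)) = (m\ast f)(mn \ast g)$. The identity is $(1,[1]_1)$ by (MP2) and (MP5).

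Next I would verify (LR1)--(LR6). These should be short computations, with $(e,[1]_e)(f,[1]_f) = (ef,[1]_{ef})$ by the fact that $1 \ast f = f$, which is (MP1). For instance, (LR6) compares $(e,[m])(f,[1]) = (e(m\ast f),[m])$ with $(e(m\ast f),[1])(e,[m]) = (e(m\ast f),[m])$. The projections are then exactly the pairs $(e,[1]_e)$, identified with $E$. The total elements are the pairs $(1,[m]_1)$, identified with $M$ by (MP5). Under these identifications $(1,m)(e,1)^{+}$ has first component $m \ast e$, so the matched pair structure is recovered. For $\equiv_e$, $(e,1)(1,m) = (e,[m]_e)$, so $em = en$ in $S$ iff $m\equiv_e n$. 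Factorizability is immediate since $(e,[m]_e) = (e,1)(1,m) \le (1,m)$. This gives (1).

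For (2), given a factorizable $S$, I would map $S[\mathsf{Proj}(S)|\mathsf{Tot}(S)] \to S$ by $(e,[m]_e) \mapsto em$. This is well defined by the definition of $\equiv_e$. It is surjective because $s = s^{+}\hat{s}$. It is injective because $(em)^{+} = (em^{+})^{+} = e$ by (LR5), so $e$ is recovered, and then $m$ is recovered up to $\equiv_e$. For it to be a homomorphism, the key identity is $em\,fn = e(mf)^{+}mn$, which is exactly (LR6). Plus is preserved by the computation above.

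For (3), I would define functors in both directions. The functor $S \mapsto [\mathsf{Proj}(S)|\mathsf{Tot}(S)]$ acts on morphisms by restriction; a homomorphism preserves $+$ and hence totals, projections, $\ast$ and $\equiv_e$. The functor $[E|M] \mapsto S[E|M]$ sends $(\alpha,\beta)$ to $(e,[m]) \mapsto (\beta e,[\alpha m])$, which is well defined by the congruence-preservation clause. The isomorphisms from (1) and (2) then need checking for naturality, which amounts to routine unwinding.

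I expect the main obstacle to be the well-definedness and associativity of the product on equivalence classes, where (MP6)--(MP8) must be used in exactly the right order. Everything else should be bookkeeping.
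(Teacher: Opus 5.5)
Your proposal is correct and follows the paper's proof essentially step for step. It uses the same pairs $(e,[m]_{e})$ with the same product and plus, the same (MP4), (MP6)--(MP8) bookkeeping for well-definedness, and the same pair of functors for (3). The only difference is cosmetic: in (2) you map $S[\mathsf{Proj}(S)|\mathsf{Tot}(S)] \to S$ by $(e,[m]_{e}) \mapsto em$, whereas the paper maps $S \to S[\mathsf{Proj}(S)|\mathsf{Tot}(S)]$ by $a \mapsto (a^{+},[m]_{a^{+}})$ with $m$ any total element above $a$. These maps are mutually inverse, and your direction makes well-definedness and the role of (LR5) and (LR6) slightly more transparent.
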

\begin{proof}
(1) We shall only sketch the proof since it is similar to the one given by Garner in \cite{G2025}.
The elements of $S$ are ordered pairs $(e, [m]_{e})$ where $e \in E$.
Define
$$(e,[a]_{e})(f,[b]_{f}) = (e(a \ast f), [ab]_{e(a \ast f)}).$$
This is well-defined by (MP4), (MP6), (MP7) and (MP8).
We therefore have a binary operation on $S$.
This is associative by (MP1) and (MP3) and has an identity $(1,[1]_{1})$ by (MP1) and (MP2).
It is therefore a monoid.
Define 
$$(e,[a]_{e})^{+} = (e,[1]_{e}).$$
It is now routine to check that $S$ is a left restriction monoid.
The total elements are those of the form $(1,[a]_{1})$.
The projections are the elements of the form $(e,[1]_{e})$.
We have that $(e,[a]_{e}) \leq (f,[b]_{f})$ iff $e \leq f$ and $a \equiv_{e} b$.
It follows that $S$ is factorizable since $(e,[a]_{e}) \leq (1,[a]_{1})$.
Define $\alpha \colon M \rightarrow \mathsf{Tot}(S)$ by $a \mapsto (1,[a]_{1})$,
a bijection by (MP5),
and define $\beta \colon E \rightarrow \mathsf{Proj}(S)$ by $e \mapsto (e,[1]_{e})$, also a bijection.
It is now routine to check that $(\alpha, \beta)$ defines an isomorphism $[E|M] \cong [\mathsf{Proj}(S)|\mathsf{Tot}(S)]$.

(2) Let $S$ be a factorizable left restriction monoid.
Then $[\mathsf{Proj}(S)|\mathsf{Tot}(S)]$ is a matched pair, and so we may construct a factorizable left restriction monoid
$\hat{S} = S[\mathsf{Proj}(S)|\mathsf{Tot}(S)]$ as in (1) above.
It remains to show that $S$ and $\hat{S}$ are isomorphic.
It is now that we use the fact that $S$ is factorizable.
Let $a \in S$.
Then, by assumption, $a = em$ where $e$ is a projection and $m$ is a total element.
If we apply axiom (LR5), then we deduce that $e = a^{+}$.
Although the projection $e$ is uniquely defined,
the element $m$ is not uniquely defined, but if $n$ is any total element above $a$ we must always have $a^{+}m = a^{+}n$
It follows that the map $\theta \colon S \rightarrow \hat{S}$, given by $a \mapsto (a^{+}, [m]_{a^{+}})$, where $m$ is any total element above $a$,
is well-defined.
It is clearly bijective, and an application of axioms (LR5) and (LR6) shows that it is a homomorphism of left restriction monoids.

(3) Let $\theta \colon S \rightarrow T$ be a homomorphism between two factorizable left restriction monoids.
Then it is routine to show that $(\theta | \mathsf{Tot}(S), \theta | \mathsf{Proj}(S))$ is a homomorphism of matched pairs.
Suppose that $(\alpha, \beta)$ is a homomorphism from the matched pair  $[E|M]$ to  the matched pair $[E'|M']$.
Define $\theta \colon S[E|M] \rightarrow S[E'|M']$ by 
$$(e, [a]_{e}) \mapsto (\beta (e), [\alpha (a)]_{\beta (e)}).$$
This is a homomorphism of left restriction monoids. 
The proof of the equivalence of categories now follows from the above and parts (1) and (2).
\end{proof}

The above theorem is a generalization of \cite{EF}, since right congruences on groups are determined by subgroups.

\section{Supported actions}

In this section, we shall define a class of actions by left restriction monoids $S$
and describe them in terms of `actions' by the matched pair $[\mathsf{Proj}(S)|\mathsf{Tot}(S)]$.

Let $S$ be a left restriction monoid.
A left action of $S$ on a set $X$, denoted by $(s,x) \mapsto s \cdot x$, is said to be {\em supported}
if there is a function $p \colon X \rightarrow \mathsf{Proj}(S)$, called the {\em support}, such that the following two axioms hold:
\begin{itemize}
\item[{\rm (E1).}] $p(x) \cdot x = x$. 
\item[{\rm (E2).}] $p(s \cdot x) = (sp(x))^{+}$.
\end{itemize}
This definition is modelled on the definition of \'etale inverse semigroup action
to be found in \cite{F2010} and \cite{S2011}. 
We prefer the term `supported' since the word `\'etale' seems to be overused.
In this, we have followed \cite{LR2021}. 
If the left restriction monoid $S$ admits a supported action on the set $X$ with support $p$ then we write $(S,X,p)$.

\begin{examples}\label{exs:examples}
{\em Here are two examples of supported actions.
Let $S$ be a left restriction monoid.
\begin{enumerate}
\item  Put $E = \mathsf{Proj}(S)$.
Define $p \colon E \rightarrow \mathsf{Proj}(S)$ to be the identity function.
Define $S \times E \rightarrow E$ by $(s,e) \mapsto s \cdot e = (se)^{+}$.
It is easy to check that this is a supported action.

\item If $e$ is any projection, then $(S,Se,p_{e})$ where $p_{e}(x) = x^{+}$
is a supported action when the action is defined via multiplication.
\end{enumerate}
}
\end{examples}

Just as in the inverse case, we can define an order $\leq$ on $X$ by $x \leq y$ iff $x = p(x) \cdot y$.
It is easy to check that $x \leq y$ iff $x = e \cdot y$ where $e$ is some projection.

If $x,y \in X$ we write $x \approx y$ if $p(x) \cdot y = p(y) \cdot x$.
Observe that if $x,y \leq z$ then $x \approx y$.

For any projection $e$, we write $X_{e}$ for the set of all elements $x \in X$ such that $p(x) = e$.
Particularly significant to us will be the set $X_{1}$.

\begin{remark}{\em Suppose that we have a supported action $(S,X,p)$.
Then $X$ carries the structure of a presheaf of sets over the semilattice $\mathsf{Proj}(S)$ as follows:
if $f$ and $e$ are projections such that $f \leq e$ define $\phi^{e}_{f} \colon X_{e} \rightarrow X_{f}$ by $x \mapsto f \cdot x$;
observe that $\phi^{e}_{e}$ is the identity function on $X_{e}$ and if $g \leq f \leq e$ then $\phi^{e}_{g} = \phi^{f}_{g}\phi^{e}_{f}$.}
\end{remark}

We say that $X$ is {\em factorizable} if for each $y \in Y$ there exists $x \in X_{1}$
such that $y \leq x$.

Let $(S,X,p)$ and $(S,X',p')$ be supported actions of a left restriction monoid $S$.
A {\em homomorphism} of such actions is a function $\theta \colon X \rightarrow X'$ 
which satisfies two conditions:
\begin{enumerate}
\item $\theta$ is $S$-equivariant. 
\item $p' \theta = p$ 
\end{enumerate}
Denote by $\mbox{hom}(X,X')$ the set of all such homomorphisms.

Fix a left restriction monoid $S$.
We then obtain a category of all supported actions of $S$ which we denote by $S$-{\bf Sup}.
We shall first of all say something about this category.
In what follows, 
we use the definition of Cartesian closed category to be found in, for example, \cite{BW}: 
thus we require a terminal object,
binary products and exponentials.

\begin{theorem}\label{them:CCV1} Let $S$ be a fixed left restriction monoid.
Put $\mathscr{C} = S$-{\bf Sup}.
\begin{enumerate}
\item $\mathscr{C}$ is Cartesian closed.
\item The subcategory of $\mathscr{C}$ consisting of those actions with surjective support is Cartesian closed.
\item The subcategory of $\mathscr{C}$ consisting of factorizable actions is Cartesian closed.
\end{enumerate}
\end{theorem}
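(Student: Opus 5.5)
The plan is to build the terminal object, binary products and exponentials of $\mathscr{C}$ explicitly. The guiding idea is that a supported action behaves like a presheaf over $\mathsf{Proj}(S)$ (see the Remark above), so products should be fibred over $\mathsf{Proj}(S)$. Exponentials should then come from a Yoneda-type argument using the supported actions $(S,Se,p_e)$ of Examples~\ref{exs:examples}(2).

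\emph{Terminal object.} I take $E=\mathsf{Proj}(S)$ with the action $s\cdot e=(se)^{+}$ and the identity as support, as in Examples~\ref{exs:examples}(1). For any $(S,X,p)$, the condition $p'\theta=p$ with $p'=\mathrm{id}$ forces $\theta=p$. Moreover $p$ is equivariant because (E2) reads $p(s\cdot x)=(sp(x))^{+}=s\cdot p(x)$. So $E$ is terminal.

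\emph{Products.} For $(S,X,p)$ and $(S,Y,q)$ I put
$$X\times_E Y=\{(x,y): p(x)=q(y)\},\qquad s\cdot(x,y)=(s\cdot x,s\cdot y),\qquad r(x,y)=p(x).$$
This set is closed under the action because $p(s\cdot x)=(sp(x))^{+}=(sq(y))^{+}=q(s\cdot y)$. Axioms (E1) and (E2) are inherited from $X$ and $Y$. The projections are homomorphisms, and the universal property follows because any pair of homomorphisms $Z\to X$, $Z\to Y$ preserves supports and so lands in the fibre product.

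\emph{Exponentials.} First I prove a Yoneda lemma: $\hom(Se,Z)\cong Z_e$ via $\theta\mapsto\theta(e)$. The inverse sends $z\in Z_e$ to $se\mapsto s\cdot z$. This is well defined because $z=e\cdot z$ gives $s\cdot z=se\cdot z$. It preserves supports because $p(s\cdot z)=(se)^{+}$. This suggests defining
$$(Y^X)_e=\hom(Se\times_E X,\;Y).$$
For the action, let $\theta\in (Y^X)_e$ and $f=(se)^{+}$. Using $fse=se$ together with (LR5), right multiplication $u\mapsto use$ is a homomorphism $Sf\to Se$. So I define $(s\cdot\theta)(u,x)=\theta(use,x)$ and check that this gives a supported action with support $e$. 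Evaluation is $\mathrm{ev}(\theta,x)=\theta(e,x)$ for $p(x)=e$. Given $\varphi\colon Z\times_E X\to Y$ and $z$ with support $e$, the curried map is $\lambda\varphi(z)(u,x)=\varphi(u\cdot z,x)$, and I would verify the bijection $\hom(Z\times_E X,Y)\cong\hom(Z,Y^X)$ in the standard way. The bulk of part (1) is routine checking with (LR1)--(LR6), namely associativity of the action on $Y^X$, equivariance of $\mathrm{ev}$ and $\lambda\varphi$, and naturality.

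For (2) and (3), each subcategory is full and the terminal object $E$ lies in it: its support is surjective, and each $e\le 1$ is below $1\in E_1$. Fibre products stay inside. For surjective supports, given $e$ pick $x\in X_e$ and $y\in Y_e$. For factorizable actions, given $(x,y)$ with common support $e$, pick $x\le x_1\in X_1$ and $y\le y_1\in Y_1$; then $(x,y)=e\cdot(x_1,y_1)$. So it only remains to show that $Y^X$ lies in the subcategory whenever $X$ and $Y$ do.

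I expect this step to be the main obstacle. For surjectivity I need, for each $e$, some homomorphism $Se\times_E X\to Y$. For factorizability I need to extend a given $\theta$ on $Se\times_E X$ to some $\hat\theta$ on $S\times_E X$ with $e\cdot\hat\theta=\theta$. My first attempt would exploit factorizability of $X$ and $Y$. I would define $\hat\theta$ fibre by fibre over projections, using that elements below a common total element are compatible (the relation $\approx$). I would also use the presheaf structure to glue the restriction $\theta$ with some fixed choice on the complementary part. If the naive constant choice fails to be equivariant, I would instead restrict to the full subcategory and check whether the exponential must be modified (for instance by taking the factorizable part $(Y^X_1)^{\downarrow}$). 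I would then verify that this modified object still satisfies the universal property for objects of the subcategory.
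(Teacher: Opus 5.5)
Your part (1) follows the paper's construction: the same fibre product and the same $Y^{X}=\bigsqcup_{e}\hom(Se\times_E X,Y)$. Your action $(s\cdot\theta)(u,x)=\theta(use,x)$ is what the paper's $\theta(ta,(ta)^{+}\cdot x')$ reduces to, because $t=t(ae)^{+}$ gives $ta=tae$ and $(ta)^{+}\cdot x'=x'$.

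The gap is that (2) and (3) depend entirely on closure under exponentials, and you leave that open. For (2), the missing map comes directly from your own Yoneda lemma. Choose $y\in Y_e$ and send $(u,x)\mapsto u\cdot y$; this is the projection $Se\times_E X\to Se$ followed by the homomorphism $Se\to Y$ attached to $y$. Supports agree because $q(u\cdot y)=(ue)^{+}=u^{+}$. So $(Y^X)_e\neq\varnothing$ whenever $Y_e\neq\varnothing$. (The paper simply asserts that $\pi$ is always surjective. That is false for $Y=\varnothing$; the hypothesis on $Y$ is what makes it true.)

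For (3), your primary plan cannot work. $Y^X$ need not be factorizable even when $X$ and $Y$ are, so in general there is no extension of $\theta\in(Y^X)_e$ to an element of $(Y^X)_1$. Here is a counterexample.

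Take $S=\{0,a,b,1\}$, the four-element Boolean algebra with $s^{+}=s$. Supported actions are then presheaves on this poset, and such an action is factorizable iff every restriction map out of the fibre over $1$ is onto. Let $X_1=\{x,x'\}$ with $a\cdot x\neq a\cdot x'$ but $b\cdot x=b\cdot x'$. Let $Y_1=\{y,y'\}$ with $a\cdot y\neq a\cdot y'$ and $b\cdot y\neq b\cdot y'$. Take all lower fibres to be the images of these, so $X_0$ and $Y_0$ are singletons. Let $\theta\in(Y^X)_a$ send $(a,a\cdot x)\mapsto a\cdot y$ and $(a,a\cdot x')\mapsto a\cdot y'$.

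Suppose $\theta=a\cdot\phi$ with $\phi\in(Y^X)_1$. Then $a\cdot\phi(1,x)=\phi(a,a\cdot x)=a\cdot y$ forces $\phi(1,x)=y$, and likewise $\phi(1,x')=y'$. Hence $b\cdot y=\phi(b,b\cdot x)=\phi(b,b\cdot x')=b\cdot y'$, a contradiction.

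The paper's own step here sets $\phi(s,x)=\theta(se,(se)^{+}\cdot x)$, but this map is not support-preserving, since $\phi(1,x)$ has support $e$. So your caution was warranted, and no choice on a ``complementary part'' can repair that route.

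Your fallback is the correct proof, and you should make it the argument: the factorizable actions form a coreflective full subcategory. Let $D\subseteq Y^X$ be the union of all factorizable sub-actions.

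When $S$ is factorizable, $D$ is exactly $\{e\cdot\phi:\phi\in(Y^X)_1\}$. This set is closed under the action because $s\cdot(e\cdot\phi)=(se)^{+}\cdot(\widehat{se}\cdot\phi)$ with $\widehat{se}$ total. For non-factorizable $S$, however, your $(Y^X_1)^{\downarrow}$ need not be closed under the action, which is why the union is the right definition.

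Now let $Z$ be factorizable and $g\colon Z\to Y^X$ a homomorphism. Then $g(z)=g(r(z)\cdot z_1)=r(z)\cdot g(z_1)$ with $g(z_1)\in(Y^X)_1$. So $g(Z)$ is a factorizable sub-action, and $g$ lands in $D$. Hence $\hom(Z,D)=\hom(Z,Y^X)\cong\hom(Z\times_E X,Y)$ naturally in $Z$. Therefore $D$, with the restricted evaluation, is the exponential in the subcategory; by the example above it is in general strictly smaller than $Y^X$.
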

\begin{proof} We prove (1) which is the main result, and indicate how (2) and (3) are proved as we go along.

We show first that the category $\mathscr{C}$ has a terminal object.
Put $E = \mathsf{Proj}(S)$.
Define $p \colon E \rightarrow \mathsf{Proj}(S)$ to be the identity function.
Define $S \times E \rightarrow E$ by $(s,e) \mapsto s \cdot e = (se)^{+}$.
By part (1) of Examples~\ref{exs:examples} this is a supported action.
Let $(S,X,q)$ be any supported action.
Define $\theta \colon X \rightarrow E$ by $\theta (s) = q(x)$.
It is easy to check that $\theta$ is a homomorphism of supported $S$-actions.
It is also the unique such homomorphism.
We have therefore proved that $(S,E,p)$ is the terminal object in $\mathscr{C}$. 
Observe that $p$ is always surjective and the action is always factorizable.

We now show that the category $\mathscr{C}$ has binary products.
Let $(S,X,p)$ and $(S,Y,q)$ be supported actions of $S$.
Define $Z$ to the set of all ordered pairs $(x,y)$ where $p(x) = q(y)$. 
Define $r \colon Z \rightarrow E(S)$ by $r(x,y) = p(x) = q(y)$.
There is an action of $S$ on $Z$ given by $s \cdot (x,y) = (s \cdot x, s \cdot y)$.
This is well-defined and gives rise to a supported action $(S,Z,r)$.
In fact, this action is the product of $(S,X,p)$ and $(S,Y,q)$ in the category $\mathscr{C}$.
For clarity, we shall denote this product action by $X \,\Box \, Y$.
Observe that if $p$ and $q$ are surjective then $r$ is surjective.
Suppose now that both $X$ and $Y$ are factorizable.
Let $(x,y) \in X \, \Box \, Y$.
By assumption, $x = p(x) \cdot x'$ and $y = q(y) \cdot y'$  where $x' \in X_{1}$ and $y' \in Y_{1}$.
Observe that $r(x',y') = 1$ and $(x,y) = r(x,y) \cdot (x',y')$.
Thus $X \, \Box \, Y$ is factorizable.

Given supported actions $(S,X,p)$ and $(S,Y,q)$ and using part (2) of Examples~\ref{exs:examples},
define
$$Y^{X} = \bigsqcup_{e \in \mathsf{Proj}(S)} \mbox{hom}(Se \, \Box \, X, Y),$$ 
and define $\pi \colon Y^{X} \rightarrow \mathsf{Proj}(S)$ by $\pi (\theta) = e$ if $\theta \in \mbox{hom}(Se \, \Box \, X,Y)$.
Observe that $\pi$ is always surjective.
We now define a function $S \times Y^{X} \rightarrow Y^{X}$ as follows: 
if $\theta \colon Se\, \Box \,X \rightarrow y$ then define
$$(a \cdot \theta)(t,x') = \theta (ta, (ta)^{+} \cdot x').$$
This is well-defined and $a \cdot \theta \in \mbox{hom}(Se\, \Box X,Y)$.
It can be checked that $(S,Y^{X},\pi)$ is a supported action.
Let $(S,Z,r)$  be any supported action.
Suppose that $g \colon Z\, \Box \, X \rightarrow Y$ is a homomorphism.
Define $\hat{g} \colon Z \rightarrow Y^{X}$ as follows:
we have that $\hat{g}(z) \colon Sr(z) \, \Box \, X \rightarrow Y$ defined by
$$\hat{g}(z)(a,x) = g(a \cdot z, (ar(z))^{+} \cdot x).$$
It can be checked that
$$\hat{g}(m \cdot z) = (m \cdot \hat{g})(z).$$
Define $\mathbf{eval} \colon Y^{X} \, \Box \, X \rightarrow Y$ as follows:
if $\pi (\theta) = e$ 
then $\mathbf{eval}(\theta, x) = \theta (e,x)$.
It is easy to check that $\mathbf{eval}(\hat{g}(z),x) = g(z,x)$.

It only remains to prove the `uniqueness' part of the definition of Cartesian closed.
Suppose that $\bar{g} \colon Z \rightarrow Y^{X}$ is a homomorphism
such that $\bar{g}(z)(r(z),x) = \hat{g}(z)(r(z), x)$.
We show that $\bar{g} = \hat{g}$.
To do this, we show that $\bar{g}(a,x) = \hat{g}(z)(a,x)$
where $a \in Sr(z)$ and $a^{+} = p(x)$.
Observe that 
$$\hat{g}(z)(a,x) = (a \cdot \hat{g})(z)(a^{+},  x) = \hat{g}(a \cdot z)(a^{+}, x) = \hat{g}(a \cdot z)(r(a \cdot z), (ar(z)))^{+} \cdot x).$$
But, by assumption, 
$$\hat{g}(a \cdot z)(r(a \cdot z), (ar(z))^{+} \cdot x) = \bar{g}(a \cdot z)(r(a \cdot z), (ar(z)))^{+} \cdot x).$$
We also have that 
$$\bar{g}(a \cdot z)(r(a \cdot z), (ar(z)))^{+} \cdot x) = (a \cdot \bar{g})(z)(r(a \cdot z), (ar(z)))^{+} \cdot x).$$ 
But
$$(a \cdot \bar{g})(z)(r(a \cdot z), (ar(z)))^{+} \cdot x)
= 
\bar{g}(z)(r(a \cdot z)a, (ar(z))^{+} \cdot x) = \bar{g}(z)(a,x).$$
We prove that $Y^{X}$ is always factorizable.
Let $\theta \colon Se\, \Box \, X \rightarrow Y$ be any homomomorphism.
Define $\phi$ as follows:
if $(s,x) \in S \, \Box \, X$ then $\phi (s,x) = \theta (se, (se)^{+} \cdot x)$.
This is a well-defined function and it is a homomorphism by axiom (LR6).
Observe that $e \cdot \phi$ is a homomorphism from $Se \, \Box \, X$ to $Y$.
It is easy to check that $\theta = e \cdot \phi$.
\end{proof}

\begin{example}{\em  Let $M$ be any monoid with identity 1.
Then $M$ is a left restriction monoid if we define $m^{+} = 1$ for all $m \in M$.
In this case, the set of projections is just $\{1\}$.
Supported actions are simply monoid actions.
The product action of two actions has as underlying set the product of the two sets.
The exponential reduces to $Y^{X} = \mbox{hom}(M \times X,Y)$ in this case.}
\end{example}

In the light of the above theorem, the following is a natural question.

\begin{question}
{\em Which Cartesian closed categories are equivalent to those of the form $S$-{\bf Sup} for some left restriction monoid $S$?}
\end{question}

Let $S$ be a left restriction monoid.
We denote the category of factorizable supported actions of $S$ by $S$-{\bf FSup}.
We shall obtain another description of this category.
We shall carry forward any notation introduced in Section~2.

Let $(S,X,p)$ be a factorizable (although, initially, this will play no role) supported action.
Put $M = \mathsf{Tot}(S)$, a monoid, and $E = \mathsf{Proj}(S)$, a meet semilattice with top element, and $Y = X_{1}$.
We therefore have a matched pair $[E|M]$.
The action of $S$ on $X$ leads to an action of $M$ on $Y$, as you can check.
{\em However, $E$ does not act of $Y$}, and this is the key point.
Following Garner, we define the relation $\equiv_{e}$ on $Y$ for each $e \in E$ by $x \equiv_{e} y$, for $x,y \in Y$, iff $e \cdot x = e \cdot y$.
The equivalence relation $\equiv_{e}$ contains some information about the action of $e$ on $Y$.
The following axioms hold:
\begin{itemize}
\item[{\rm (MPA1).}] The monoid $M$ acts on the set $Y$.
\item[{\rm (MPA2).}] For each $e \in E$ there is an equivalence relation $\equiv_{e}$
\item[{\rm (MPA3).}] $\equiv_{1}$ is the identity equivalence.
\item[{\rm (MPA4).}] If $f \leq e$ then $x \equiv_{e} y$ implies that $x \equiv_{f} y$.
\item[{\rm (MPA5).}] If $m \equiv_{e} n$ in $M$ and $x \in Y$ then $m \cdot x \equiv_{e} n \cdot x$.
\item[{\rm (MPA6).}] If $x \equiv_{e} y$ in $Y$ then for any $m \in M$ we have that $m \cdot x \equiv_{m \ast e} m \cdot y$.
\end{itemize}
{\em Observe that whereas $M$ really does act on the set $Y$, the monoid $E$ does not act on $Y$.
Nevertheless, we shall regard this as an `action' of the matched pair $[E|M]$ on the set $Y$.}
We call the set $Y$ an {\em $[E|M]$-set}.

If $Y$ and $Y'$ are $[E|M]$-sets, then a {\em homomorphism} from $Y$ to $Y'$ is a function $\alpha \colon Y \rightarrow Y'$
such that $\alpha$ is a map of $M$-actions and if $x \equiv_{e} y$ then $\alpha(x) \equiv_{e} \alpha (y)$.
We may therefore form the category of $[E|M]$-sets which we denote by $[E|M]$-{\bf Set}.

\begin{theorem}\label{them:mary} Let $S$ be a left restriction monoid.  
The category $S$-{\bf FSup} is equivalent to the category $[\mathsf{Proj}(S)|\mathsf{Tot}(S)]$-{\bf Set}.
\end{theorem}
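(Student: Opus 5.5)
The plan is to use the functor $\Phi \colon S\text{-}\mathbf{FSup} \rightarrow [E|M]\text{-}\mathbf{Set}$, with $E = \mathsf{Proj}(S)$ and $M = \mathsf{Tot}(S)$, given by $(S,X,p) \mapsto Y = X_{1}$. Here $Y$ carries the restricted action of $M$, and $x \equiv_{e} y$ iff $e \cdot x = e \cdot y$. I will show that $\Phi$ is well defined, full, faithful and essentially surjective.

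\emph{Well-definedness.} For $m \in M$ and $y \in X_{1}$, axiom (E2) gives $p(m \cdot y) = (m1)^{+} = 1$, so $M$ acts on $Y$. For the axioms: (MPA3) is (E1); (MPA4) is $f \cdot y = f \cdot (e \cdot y)$; (MPA5) follows from $e \cdot (m \cdot x) = (em) \cdot x$; and (MPA6) follows from (LR6), since $(m \ast e)m = (me)^{+}m = me$. A homomorphism $\theta$ of supported actions preserves $p$ and commutes with the projections, so $\theta|_{X_{1}}$ is a homomorphism of $[E|M]$-sets.

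\emph{Faithful and full.} Because $X$ is factorizable, every $x \in X$ equals $p(x) \cdot y$ for some $y \in X_{1}$. Hence $\theta(x) = p(x) \cdot \theta(y)$, and $\theta$ is determined by its restriction, which gives faithfulness. For fullness, take an $[E|M]$-map $\alpha \colon X_{1} \rightarrow X'_{1}$ and put $\theta(e \cdot y) = e \cdot \alpha(y)$. This is well defined: if $e \cdot y = e' \cdot y'$, then applying $p$ gives $e = e'$, so $y \equiv_{e} y'$ and therefore $\alpha(y) \equiv_{e} \alpha(y')$. Clearly $p'\theta = p$. For equivariance under an arbitrary $s \in S$ I use (LR6) in the form $se = (se)^{+}s$. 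This reduces the problem to showing $\theta(s \cdot y) = s \cdot \alpha(y)$ for $y \in X_{1}$. By factorizability, $s \cdot y = s^{+} \cdot z$ for some $z \in X_{1}$, and I must show $s \cdot \alpha(y) = s^{+} \cdot \alpha(z)$.

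\emph{Essential surjectivity.} Given an $[E|M]$-set $Y$, let $X$ be the set of pairs $(e,[y]_{e})$ with support $p(e,[y]_{e}) = e$. The candidate action of a total-below element $s = s^{+}m$ is
$$s \cdot (e,[y]_{e}) = \bigl((se)^{+}, [m \cdot y]_{(se)^{+}}\bigr),$$
which is well defined by (MPA5), (MPA6) and (MP8). Checking (E1), (E2) and associativity repeats the calculation of Theorem 2's part (1), with (MPA1)--(MPA6) in place of (MP1)--(MP8). Then $X_{1} \cong Y$ via (MPA3), and the unit and counit are the evident bijections $x \mapsto (p(x),[y]_{p(x)})$ with $x = p(x) \cdot y$.

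\emph{Main obstacle.} The hard step, in both fullness and essential surjectivity, is an element $s \in S$ lying below no total element. Its action on $X_{1}$ is not visibly encoded in the $[E|M]$-data. In fullness it is the equation $s \cdot \alpha(y) = s^{+} \cdot \alpha(z)$ above. In essential surjectivity it is the question of how to define $s \cdot (e,[y]_{e})$ at all.

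My first attempt would be to exploit factorizability of the action itself. For each $y \in X_{1}$ there is $z \in X_{1}$ with $s \cdot y = s^{+} \cdot z$. I would try to show that the assignment $y \mapsto [z]_{s^{+}}$ is forced by the axioms, for instance by acting on the terminal object $E$ of Theorem~\ref{them:CCV1} and on the representables $Se$ of Examples~\ref{exs:examples}, and is preserved by $[E|M]$-maps. If this fails, I would check whether the action of $S$ on a factorizable action factors through the factorizable submonoid $S' = \mathsf{Tot}(S)^{\downarrow}$ of the Remark. That submonoid has the same projections and total elements, so $[\mathsf{Proj}(S')|\mathsf{Tot}(S')] = [\mathsf{Proj}(S)|\mathsf{Tot}(S)]$. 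I would then test whether restriction $S\text{-}\mathbf{FSup} \rightarrow S'\text{-}\mathbf{FSup}$ is an equivalence, reducing to the factorizable case. I am genuinely unsure this step goes through for arbitrary $S$. Small non-factorizable examples, such as a monoid with a zero adjoined, should be checked first to see whether the extra freedom in $s \cdot y$ breaks fullness.
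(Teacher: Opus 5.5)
Your well-definedness check, your faithfulness argument and your reduction of equivariance to elements of $X_{1}$ are all fine. The step you flag as the main obstacle, however, is a genuine gap, and in the stated generality it cannot be filled: the theorem is false for non-factorizable $S$.

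Take $S=\{1,e,s,0\}$, the monoid of partial maps of $\{a,b\}$ composed left to right. Here $s$ sends $a\mapsto b$ and is undefined at $b$, and $e=s^{+}=\mathrm{id}_{\{a\}}$. Then $es=s$ and $s^{2}=se=0$. Also $\mathsf{Tot}(S)=\{1\}$ and $\mathsf{Proj}(S)=\{0<e<1\}$, and $s$ lies below no total element.

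On one side, an $[\mathsf{Proj}(S)|\mathsf{Tot}(S)]$-set is just a set with nested equivalence relations $\equiv_{e}\subseteq\equiv_{0}$. So the terminal object $\{\ast\}$ has $\hom(\{\ast\},Y)\cong Y$ and is a separator.

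On the other side, let $X=\{u,v,u',v',z\}$ with supports $1,1,e,e,0$. Let $e$ send $u\mapsto u'$ and $v\mapsto v'$ and fix the rest, let $0$ send everything to $z$, and let $s$ send $u\mapsto v'$, $v\mapsto u'$ and $u',v',z\mapsto z$. This is a factorizable supported action. In the terminal object $\mathsf{Proj}(S)$ we have $e\cdot 1=s\cdot 1=e$. So a homomorphism $\theta$ from it would need $e\cdot\theta(1)=\theta(e)=s\cdot\theta(1)$, which fails for both choices $\theta(1)\in\{u,v\}$. Thus $X$ has no global elements. Yet $X$ has two distinct endomorphisms: the identity and the swap $u\leftrightarrow v$, $u'\leftrightarrow v'$. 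Hence the terminal object of $S$-{\bf FSup} is not a separator, and no equivalence can exist. Your fallback through $\mathsf{Tot}(S)^{\downarrow}$ fails for the same reason: restricting to that submonoid forgets exactly the data carried by $s$.

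The paper's proof rests on the same unstated hypothesis. Its inverse construction begins ``let $s=s^{+}\hat{s}$ where $\hat{s}\in\mathsf{Tot}(S)$'', i.e.\ it assumes $S$ is factorizable. That assumption does hold in the Boolean setting of Section~4, which is where the result is actually used. Under it, your plan closes at once and follows the paper's route: restrict to $X_{1}$ in one direction, and rebuild $\bigcup_{e}Y/\equiv_{e}$ in the other.

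For fullness, $s\cdot y=s^{+}\cdot(\hat{s}\cdot y)$. So any $z\in X_{1}$ with $s\cdot y=s^{+}\cdot z$ satisfies $z\equiv_{s^{+}}\hat{s}\cdot y$. Hence $\alpha(z)\equiv_{s^{+}}\hat{s}\cdot\alpha(y)$, and therefore $s^{+}\cdot\alpha(z)=s\cdot\alpha(y)$.

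For essential surjectivity, your formula is now defined for every $s$. Independence of the choice of $\hat{s}$ and of the representative follows from (MPA5), (MPA6) and (MPA4). The point is that two total elements above $s$ are $\equiv_{s^{+}}$-related, and $(se)^{+}\leq s^{+}$ and $(se)^{+}\leq(\hat{s}e)^{+}$.
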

\begin{proof} 
We first define a functor from $S$-{\bf FSup} to $[\mathsf{Proj}(S)|\mathsf{Tot}(S)]$-{\bf Set}.
We start with a supported action $(S,X,p)$.
As above, we have a $[\mathsf{Proj}(S)|\mathsf{Tot}(S)]$-set $X_{1}$.
Suppose we also have a supported action $(S,X',p')$ and a homomorphism of supported actions $\theta \colon X \rightarrow X'$.
Put $\theta' = \theta | X_{1}$.
Suppose that $p(y) = 1$ then $p' (\theta (y)) = 1$.
It follows that $\theta' \colon X_{1} \rightarrow X'_{1}$ is a well-defined function.
Suppose that $x \equiv_{e} x'$ in $X_{1}$.
Then, by definition, $e \cdot x = e \cdot x'$.
It follows that $e \cdot \theta (x) = e \cdot \theta (x')$.
Thus $\theta'$ is a homomorphism of $[\mathsf{Proj}(S)|\mathsf{Tot}(S)]$-sets.
We have therefore defined our functor.

We now define a functor from $[\mathsf{Proj}(S)|\mathsf{Tot}(S)]$-{\bf Set} to 
$S$-{\bf FSup}.
We start with a $[\mathsf{Proj}(S)|\mathsf{Tot}(S)]$-set $Y$.
Define
$$X = \bigcup_{e \in \mathsf{Proj}(S)} Y/\equiv_{e}$$
and
$p \colon X \rightarrow \mathsf{Proj}(S)$
by $p([x]_{e}) = e$.
We now define what will turn out to be a supported action of $S$ on $X$.
Let $s = s^{+}\hat{s}$ where $\hat{s} \in \mathsf{Tot}(S)$.
Define 
$$s \bullet [x]_{e} = [\hat{s} \cdot x]_{(se)^{+}}.$$
We show that this is well-defined.
Let $[x]_{e} = [x']_{e}$ and 
$s = s^{+}\hat{s} = s^{+}\hat{t}$, where $\hat{s}, \hat{t} \in \mathsf{Tot}(S)$.
We prove that
$[\hat{s} \cdot x]_{(se)^{+}} = [\hat{t} \cdot x']_{(se)^{+}}$.
From $\hat{s} \equiv_{s^{+}} \hat{t}$ we get that $\hat{s} \cdot x \equiv_{s^{+}} \hat{t} \cdot x$
by axiom (MPA5).
From $x \equiv_{e} x'$ we get $\hat{t} \cdot x \equiv_{\hat{t} \ast e} \hat{t} \cdot x'$
by axiom (MPA6).
We now apply axiom (MPA4) twice.
From $(se)^{+} \leq s^{+}$,
we get $\hat{s} \cdot x \equiv_{(se)^{+}} \hat{t} \cdot x$,
and from $(se)^{+} \leq (\hat{t}e)^{+}$,
we get $\hat{t} \cdot x \equiv_{(se)^{+}} \hat{t} \cdot x'$.
It follows that	
$$\hat{s} \cdot x \equiv_{(\hat{s}e)^{+}} \hat{t} \cdot x'.$$
It is now easy to show that $(S,X,p)$ is a supported action.
Observe that this is actually factorizable: 
let $[x]_{e}$ be any element of $X$.
Then $e \bullet [x]_{1} = [x]_{e}$.
We now turn to homomorphisms.
Suppose that $\alpha \colon Y \rightarrow Y'$ is a homomorphism of 
$[\mathsf{Proj}(S)|\mathsf{Tot}(S)]$-sets.
Let $(S,X,p)$ and $(S,X',p')$ be the corresponding supported actions.
Then we may define a homomorphism $\theta$ of supported actions from $X$ to $X'$
by
$$\theta ([y]_{e}) = [\alpha (y)]_{e}.$$
We have therefore defined a functor from the category 
$[\mathsf{Proj}(S)|\mathsf{Tot}(S)]$-{\bf Set} to 
the category $S$-{\bf FSup}

To prove that these two functors define an equivalence of categories,
we have to prove two things; we do this below.

Suppose first that $(S,X,p)$ is a supported action.
Put $X' = \bigcup_{e \in \mathsf{Proj}(S)} X_{1}/\equiv_{e}$.
Define $\theta \colon X \rightarrow X'$ by $\theta (x) = [y]_{p(x)}$
where $p(y) = 1$ and $x \leq y$.
This is an isomorphism of supported actions.

Let $Y$ be a $[\mathsf{Proj}(S)|\mathsf{Tot}(S)]$-set.
The associated supported action set is $X = \bigcup_{e \in \mathsf{Proj}(S)} x/\equiv_{e}$
and the associated support is $p([x]_{e}) = e$.
Thus the set $X_{1}$ consists of singleton sets $\{y\}$ (by axiom (MPA3)) where $y \in Y$.
There is an obvious isomorphism.  
\end{proof}

The following will be needed later.
The proof is routine.

\begin{lemma}\label{lem:agatha}
Let $S$ be a left restriction monoid.
Let $Y$ be a $[\mathsf{Proj}(S)|\mathsf{Tot}(S)]$-set.
The semigroup $S$ acts on the set $X$ where
$$X = \bigcup_{e \in \mathsf{Proj}(S)} Y/\equiv_{e}$$
where
$$[x']_{f} \leq [x]_{e} \text{ iff } f \leq e \text{ and } x' \equiv_{f} x.$$ 
\end{lemma}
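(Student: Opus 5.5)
The plan is to reuse the construction from the proof of Theorem~\ref{them:mary}. We take the action $s \bullet [x]_{e} = [\hat{s}\cdot x]_{(se)^{+}}$ with $s = s^{+}\hat{s}$ and $\hat{s}\in\mathsf{Tot}(S)$, and the support $p([x]_e)=e$. Two things need proving. First, this really is a supported action of $S$ on $X$. Second, the order that any supported action induces, $u\le v$ iff $u=p(u)\bullet v$, is exactly the order described in the statement. Throughout we assume that every $s$ lies below some total element, i.e.\ that $S$ is factorizable; without this the action is not even defined, and this is the setting of Theorem~\ref{them:mary}.

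\textbf{Step 1: the action.} Well-definedness was already checked in the proof of Theorem~\ref{them:mary} using (MPA4)--(MPA6), and we quote it. The identity acts trivially: take $\hat{1}=1$, so $1\bullet[x]_e=[x]_{e}$. For associativity, let $s=s^{+}\hat s$ and $t=t^{+}\hat t$. We first show that $\hat s\hat t$ is a total element above $st$. Indeed,
\[
st = s^{+}\hat s t^{+}\hat t = s^{+}(\hat s t^{+})^{+}\hat s\hat t
\]
by (LR6). Hence $st=(st)^{+}\hat s\hat t$, using $(st)^{+}=s^{+}(\hat st^{+})^{+}$, which follows from (LR5) and (LR2). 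Now compare the two sides. On one side,
\[
s\bullet(t\bullet[x]_e)=[\hat s\hat t\cdot x]_{(s(te)^{+})^{+}}.
\]
On the other, $(st)\bullet[x]_e=[\hat s\hat t\cdot x]_{(ste)^{+}}$. The subscripts agree because $(s(te)^{+})^{+}=(ste)^{+}$ by (LR5). We expect this bookkeeping of projections to be the only real obstacle, namely checking that the projection subscripts coincide. The remaining identities are monoid action laws from (MPA1).

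\textbf{Step 2: support axioms.} For (E1), take $\hat e=1$ for a projection $e$. Then $p([x]_e)\bullet[x]_e=[x]_{(ee)^{+}}=[x]_e$. For (E2), we have $p(s\bullet[x]_e)=(se)^{+}=(s\,p([x]_e))^{+}$ by definition.

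\textbf{Step 3: the order.} Suppose $[x']_f\le[x]_e$ in the induced order. Then $[x']_f=f\bullet[x]_e=[x]_{fe}$. Comparing supports gives $f=fe$, so $f\le e$. Comparing classes gives $x'\equiv_f x$. Conversely, suppose $f\le e$ and $x'\equiv_f x$. Then $f\bullet[x]_e=[x]_{f}=[x']_f$. Note that the description is independent of the chosen representative $x$ of $[x]_e$: if $x\equiv_e x''$, then $x\equiv_f x''$ by (MPA4). This gives the stated characterization.
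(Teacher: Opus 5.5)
Your proposal is correct and follows the approach the paper intends: the paper only says the proof is routine, relying on the action $s\bullet[x]_e=[\hat s\cdot x]_{(se)^+}$ and the support $p([x]_e)=e$ from the proof of Theorem~\ref{them:mary}. Your checks of associativity (via $st=(st)^+\hat s\hat t$), of (E1)--(E2), and of $f\bullet[x]_e=[x]_{fe}$ supply exactly the omitted details, and you are right to make factorizability of $S$ explicit, since the paper tacitly assumes $s=s^+\hat s$ there too.
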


\section{The Boolean case}

In this section, we shall refine what we have shown in the previous two sections
and make contact with Garner's work.
To this end, we shall not work with arbitrary left restriction monoids but those which are `Boolean'.
Our first goal, then, is to define what this means.

Let $S$ be a left restriction monoid.
Define $a \sim_{r} b$, on $S$, and say that $a$ is {\em right-compatible} with $b$,tube
if $a^{+}b = b^{+}a$.
Being right-compatible is a necessary condition for a pair of elements to have a join.
If $S$ has a zero which is also a projection,
then we say that the elements $s$ and $t$ are {\em right-orthogonal}, denoted by $s \perp t$, if $s^{+}$ and $t^{+}$ are orthogonal (meaning $s^{+}t^{+} = 0$).
Right-orthogonal elements are right-compatible.
We say that a left restriction monoid $S$ is {\em Boolean} if it satisfies the following three axioms:
\begin{itemize}
\item[{\rm (B1).}] $\mathsf{Proj}(S)$ is a Boolean algebra with respect to the natural partial order
with the zero of the semigroup being the zero of the Boolean algebra, and the identity of the monoid being the
one of the Boolean algebra. 
\item[{\rm (B2).}] Right-compatible pairs of elements have joins. We say that {\em binary joins exist}.
\item[{\rm (B3).}] Multiplication distributes over binary joins from the left and from the right. 
\end{itemize}
Despite appearances, our definition of Boolean left restriction monoid agrees with the definition
of Boolean restriction monoid given as \cite[Definition 3.2]{G2025}.
Homomorphisms of Boolean left restriction monoids are required to preserve binary joins.
The left-right dual of the following result is proved in \cite[Lemma 3.1]{L2025}.
It will be used many times.

\begin{lemma}\label{lem:induction} In any Boolean left restriction monoid,
if $a = b \vee c$ exists then $a^{+} = b^{+} \vee c^{+}$.
\end{lemma}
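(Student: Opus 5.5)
We must show that if $a = b \vee c$ exists in a Boolean left restriction monoid, then $a^{+} = b^{+} \vee c^{+}$, the join being taken in the Boolean algebra $\mathsf{Proj}(S)$.

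The plan is to prove the two inequalities separately. We use only the natural partial order, the fact that $s \leq t$ implies $s^{+} \leq t^{+}$, and the distributivity axiom (B3).

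\emph{First inequality.} Since $b \leq a$ and $c \leq a$, we get $b^{+} \leq a^{+}$ and $c^{+} \leq a^{+}$. Hence $e := b^{+} \vee c^{+} \leq a^{+}$ in the Boolean algebra $\mathsf{Proj}(S)$. Here we should check that the join of projections in the natural partial order of $S$ agrees with the Boolean join. Orthogonal or commuting projections are right-compatible, so their join exists in $S$ by (B2). By (B1) this join is the Boolean one, since the natural order restricted to projections is the semilattice order.

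\emph{Second inequality.} Consider $ea$. By (B3), left distributivity over the binary join gives
$$ea = e(b \vee c) = eb \vee ec.$$
Since $b = b^{+}b$ and $b^{+} \leq e$, we have $eb = eb^{+}b = b^{+}b = b$, and similarly $ec = c$. Thus $ea = b \vee c = a$.

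Applying plus and using (LR2) and (LR5) gives
$$a^{+} = (ea)^{+} = (ea^{+})^{+} = ea^{+}.$$
So $a^{+} \leq e$, and combining with the first inequality yields $a^{+} = e = b^{+} \vee c^{+}$.

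The main obstacle is the bookkeeping in the first step: confirming that the join $b^{+} \vee c^{+}$ computed in $S$ (via (B2)) is the Boolean join, so that (B3) may be applied with $e$ a projection. I expect this to follow from (B1) together with the observation that projections are pairwise right-compatible, since $e^{+}f = ef = fe = f^{+}e$. Once that is settled, the second step is just the computation $ea = a$ via (B3).
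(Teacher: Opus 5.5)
Your proof is correct. The paper gives no argument of its own here; it defers to the left--right dual of \cite[Lemma 3.1]{L2025}. Your two-inequality proof is therefore a complete, self-contained replacement: from $b^{+}, c^{+} \leq a^{+}$ you get $e \leq a^{+}$, left distributivity gives $ea = eb \vee ec = b \vee c = a$, and hence $a^{+} = (ea^{+})^{+} = ea^{+} \leq e$.

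The `main obstacle' you flag is not one. You define $e$ as the join in the Boolean algebra $\mathsf{Proj}(S)$, so it is a projection by definition. The first inequality uses only that $e$ is the least upper bound of $b^{+}, c^{+}$ within $\mathsf{Proj}(S)$, which is exactly what (B1) provides. (B3) is applied to the join $b \vee c$, with $e$ merely a left multiplier. Joins of projections computed in $S$ never enter.

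Had that identification been needed, your sketch would fall short. Knowing that the order on $\mathsf{Proj}(S)$ is the restricted natural order only gives $j \leq e$ for the $S$-join $j$ of $b^{+}$ and $c^{+}$. You must also note that $j \leq e$ forces $j = j^{+}e$ to be a projection, by (LR2). Only then does minimality of $e$ in $\mathsf{Proj}(S)$ yield $e \leq j$.

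You can also drop (B3) entirely. Since $b^{+} = b^{+}e$, we have $b = b^{+}a = b^{+}(ea)$, so $b \leq ea$; likewise $c \leq ea$; hence $a \leq ea \leq a$. Run the same computation with an arbitrary projection $f$ above $b^{+}$ and $c^{+}$ in place of $e$. It shows that in \emph{any} left restriction monoid, whenever $b \vee c$ exists, $(b \vee c)^{+}$ is the least projection above $b^{+}$ and $c^{+}$. So the lemma holds without the Boolean hypotheses, with the join taken in $\mathsf{Proj}(S)$.
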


The following will be needed later, but answers an obvious question.

\begin{lemma}\label{lem:people} Let $S$ be a Boolean left restriction monoid.
Let $s \in S$ and $e$ be any projection.
Then $s\,\bar{e} = \overline{(se)^{+}}\,s$.
\end{lemma}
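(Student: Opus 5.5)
The plan is to decompose $s$ as a join of two right-orthogonal pieces and then read off the required identity. Put $f=(se)^{+}$. By (LR6), $se = (se)^{+}s = fs$. Since $e+\bar e=1$ and $e\bar e=0$, the elements $e$ and $\bar e$ are right-orthogonal projections with join $1$. Left distributivity (B3) then gives
\[
s = s(e\vee \bar e) = se \vee s\bar e .
\]
Applying Lemma~\ref{lem:induction} to this join yields $s^{+} = (se)^{+} \vee (s\bar e)^{+} = f + (s\bar e)^{+}$.

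Next I would show that $f$ and $(s\bar e)^{+}$ are orthogonal. Using (LR5) and (LR6) we get
\[
f(s\bar e)^{+} = ((se)^{+} s\bar e)^{+} = (se\,\bar e)^{+} = (s0)^{+}.
\]
Here $s0$ is a projection-times-something that should equal $0$. To justify this, note $(s0)^{+} = (s0^{+})^{+}$, and $s0 = (s0)^{+}s$ by (LR6). Also $0\cdot s = 0$, since $0$ is the zero of the semigroup, which gives $(s0)^{+} \leq$ ... and in fact $s0=s\cdot 0$ directly equals the semigroup zero because $0$ is a zero. So $f$ and $(s\bar e)^{+}$ are disjoint in the Boolean algebra with join $s^{+}$. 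This forces $(s\bar e)^{+} = \bar f s^{+}$.

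Finally, (LR6) gives $s\bar e = (s\bar e)^{+} s = \bar f s^{+} s = \bar f s$, using (LR4). That is the claim $s\,\bar e = \overline{(se)^{+}}\,s$.

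The main obstacle is the middle step: confirming that the decomposition $s = se\vee s\bar e$ is legitimate. This needs the right-compatibility of $se$ and $s\bar e$, which follows from their right-orthogonality, together with $s0 = 0$. The rest is Boolean algebra on $s^{+}$.
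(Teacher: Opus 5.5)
Your argument is correct and is essentially the paper's: both split $s = se \vee s\bar{e}$, use Lemma~\ref{lem:induction} to get the orthogonal decomposition $s^{+} = (se)^{+} + (s\bar{e})^{+}$, and then use that an element below $s$ is determined by its plus. The paper compares this with a second decomposition $s = se \vee \overline{(se)^{+}}\,s$, whereas you go directly via $s\bar{e} = (s\bar{e})^{+}s = \overline{(se)^{+}}\,s^{+}s$, and you explicitly verify the orthogonality $(se)^{+}(s\bar{e})^{+} = (s0)^{+} = 0$ that the paper leaves implicit. Two small repairs: replace the garbled passage about $s0$ with the observation that $0$ is the semigroup zero by (B1), so $s0 = 0$ and $0^{+} = 0$; and note that the identity $g\,a^{+} = (ga)^{+}$ for a projection $g$ uses (LR2) as well as (LR5).
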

\begin{proof} We have that $1 = e \vee \bar{e}$ and so $s = se \vee s \bar{e}$.
Similarly, $1 = (se)^{+} \vee \overline{(se)^{+}}$
and so $s = (se)^{+}s \vee \overline{(se)^{+}}s = se \vee \overline{(se)^{+}}s$.
We have therefore shown that $s\bar{e} \sim_{r} \overline{(se)^{+}}s$, since both elements are beneath $s$.
The result will follow if we can show that $(s\bar{e})^{+} = (\overline{(se)^{+}}s)^{+}$.
However, this follows from standard Boolean algebra
given that $s^{+} = (se)^{+} \vee (s\bar{e})^{+}$, by Lemma~\ref{lem:induction}, is an orthogonal join
and $s^{+} = (se)^{+} \vee (\overline{(se)^{+}}s)^{+}$, by Lemma~\ref{lem:induction}, is also an orthogonal join.
\end{proof}

The next result connects what we are doing here with Section~2.

\begin{lemma} 
Every Boolean left restriction monoid is factorizable.
\end{lemma}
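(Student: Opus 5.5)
Given $s \in S$, the natural candidate for a total element above $s$ is obtained by gluing $s$ to something total on the complement of its domain. I would put
$$\hat{s} = s \vee \overline{s^{+}},$$
where $\overline{s^{+}}$ is the complement of $s^{+}$ in the Boolean algebra $\mathsf{Proj}(S)$ (axiom (B1)). Any total element restricted to $\overline{s^{+}}$ would do, but the projection $\overline{s^{+}}$ itself is the simplest choice since it lies below $1$.

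The first step is to check that this join exists, by showing $s \perp \overline{s^{+}}$. We have $s^{+}(\overline{s^{+}})^{+} = s^{+}\overline{s^{+}} = 0$, using that projections satisfy $a^{+} = a$. Hence $s$ and $\overline{s^{+}}$ are right-orthogonal, so right-compatible, and axiom (B2) gives the join $\hat{s}$.

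The second step is to show that $\hat{s}$ is total. By Lemma~\ref{lem:induction},
$$\hat{s}^{+} = s^{+} \vee \overline{s^{+}} = 1.$$
Here I would need to confirm that the join of two projections in the natural partial order agrees with the Boolean join in $\mathsf{Proj}(S)$. This is plausible because an element below a projection is a projection (if $a \leq e$ then $a = a^{+}e$ is a product of projections), but I would check it carefully.

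The final step is immediate: $s \leq \hat{s}$ because $s$ is one of the joinands. Therefore $S$ is factorizable. The only mildly delicate point is the compatibility of the two notions of join on projections in the second step, which follows from (B1) identifying the Boolean order with the natural partial order.
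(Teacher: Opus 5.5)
Your proof is correct and is essentially the paper's argument: take $\hat{s} = s \vee \overline{s^{+}}$, which exists because $s$ and $\overline{s^{+}}$ are right-orthogonal, and get totality from Lemma~\ref{lem:induction}. Your extra check that the join of $s^{+}$ and $\overline{s^{+}}$ in $S$ coincides with the Boolean join is left implicit in the paper, and your argument settles it: that join lies below the upper bound $1$, so it is a projection, and hence it equals $1$.
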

\begin{proof} Let $S$ be a Boolean left restriction monoid.
Let $s \in S$.
Then the elements $s$ and $\overline{s^{+}}$ are right-orthogonal.
If we put $\hat{s} = s \vee \overline{s^{+}}$ then we obtain a total element, by Lemma~\ref{lem:induction}, that is above $s$ in the natural partial order.
We may therefore write $s = s^{+}\hat{s}$.
It follows that Boolean left restriction monoids are automatically factorizable.
\end{proof}

\begin{center}
{\bf The construction of Boolean left restriction monoids from matched pairs}
\end{center}

We shall now strengthen our definition of matched pairs to the case where $E = B$ is actually a Boolean algebra.
This will then agree with the definition of matched pair that Garner gave.
Let $M$ be a monoid and $B$ a Boolean algebra.
We say that $[B|M]$ is a {\em matched pair} if the following axioms hold:
\begin{itemize}
\item[{\rm (MP1).}] The monoid $M$ acts on the set $B$.
\item[{\rm (MP2).}] $m \ast 1 = 1$ for all $m \in M$.
\item[{\rm (MP3).}] $m \ast (ef) = (m \ast e)(m \ast f)$.
\item[{\rm (MP4).}] $\equiv_{e}$ is a right congruence on $M$. The $\equiv_{e}$-class containing the element $m$ will be denoted by $[m]_{e}$.
\item[{\rm (MP5).}] $\equiv_{1}$ is the identity right congruence.
\item[{\rm (MP6).}] If $f \leq e$ then $m \equiv_{e} n$ implies that $m \equiv_{f} n$.
\item[{\rm (MP7).}] If $a \equiv_{e} a'$ then $ma \equiv_{m \ast e} ma'$ for any $m \in M$.
\item[{\rm (MP8).}] If $a \equiv_{e} a'$ then $e(a \ast f) = e(a' \ast f)$ for any $f \in E$.
\item[{\rm (MP9).}] $\equiv_{0}$ is the universal right congruence.
\item[{\rm (MP10).}] $m \ast (e + f) = m \ast e + m \ast f$.
\item[{\rm (MP11).}] if $m \equiv_{e} n$ and $m \equiv_{f} n$ then $m \equiv_{e+f} n$. 
\item[{\rm (MP12).}] Given any\ $m,n \in M$ and any $e \in B$, then there exists a $p \in M$ such that $p \equiv_{e} m$ and $p \equiv_{\bar{e}} n$. 
\end{itemize}
These are the same axioms as in Section~2 but augmented by the axioms (MP9)--(MP12) to take account of the extra structure in a Boolean algebra
compared with a semilattice.

Our first result connects Boolean left restriction monoids with matched pairs.

\begin{lemma}\label{lem:anne} Let $S$ be a Boolean left restriction monoid.
Then $[\mathsf{Proj}(S)|\mathsf{Tot}(S)]$ is a matched pair (in the above sense).
\end{lemma}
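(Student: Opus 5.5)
Since a Boolean left restriction monoid is in particular a left restriction monoid, the first lemma of Section~2 already gives axioms (MP1)--(MP8) for $[\mathsf{Proj}(S)|\mathsf{Tot}(S)]$. By (B1), $\mathsf{Proj}(S)$ is a Boolean algebra. So the plan is to verify only the new axioms (MP9)--(MP12). Throughout we use $m \ast e = (me)^{+}$ and $m \equiv_{e} n$ iff $em = en$, with $m,n$ total.

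(MP9) is immediate, because $0$ is the zero of $S$, so $0m = 0 = 0n$.

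For (MP10), write $e+f = e \vee f$ as a join of compatible projections. By (B3), $m(e \vee f) = me \vee mf$. Lemma~\ref{lem:induction} then gives
$$(m(e\vee f))^{+} = (me)^{+} \vee (mf)^{+},$$
which is $m\ast e + m \ast f$ in the Boolean algebra.

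For (MP11), suppose $em = en$ and $fm = fn$. Then distributivity from the right, (B3), gives
$$(e\vee f)m = em \vee fm = en \vee fn = (e \vee f)n.$$

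(MP12) is the step needing a construction. Given total $m,n$ and $e\in B$, the elements $em$ and $\bar e n$ are right-orthogonal, since $(em)^{+} = e$ and $(\bar e n)^{+} = \bar e$ by (LR5) and totality. So by (B2) the join $p = em \vee \bar e n$ exists. By Lemma~\ref{lem:induction}, $p^{+} = e \vee \bar e = 1$, so $p$ is total.

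It remains to check $ep = em$ and $\bar e p = \bar e n$. Using (B3),
$$ep = eem \vee e\bar e n = em \vee 0.$$
Here $0 = 0\cdot n$ lies below everything, since $0$ is a projection, so $ep = em$. Symmetrically, $\bar e p = \bar e n$.

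The main point requiring care is this use of left distributivity over the join, together with $0$ being the bottom element. This is routine given (B1) and (B3), and I expect no real obstacle.
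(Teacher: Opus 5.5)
Your proof is correct and follows the paper's argument: (MP1)--(MP8) come from Section~2, (MP9)--(MP11) follow by direct computation using distributivity, and (MP12) uses $p = em \vee \bar{e}n$, where you usefully supply details the paper omits (that $p$ is total, and that $ep = em$ and $\bar{e}p = \bar{e}n$). The last check can be shortened: since $em \leq p$, the definition of the natural partial order gives $em = (em)^{+}p = ep$ directly, and likewise $\bar{e}n = \bar{e}p$.
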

\begin{proof} We know from Section~2, that the axioms (MP1)--(MP8) all hold.
It is immediate that axioms (MP9) and (MP10) hold.
To see that (MP11) holds, suppose that $m \equiv_{e} n$ and $m \equiv_{f} n$ both hold.
Then $em = en$ and $fm = fn$.
It is easy to check that $(e + f)m = (e + f)n$, and the result follows.
For axiom (MP12), put $p = em \vee \bar{e}n$.
This makes sense because $em$ and $\bar{e}n$ are right-orthogonal.
\end{proof}

The above result raises the question of whether every Boolean left restriction monoid
arises from such a matched pair. 
The answer is in the affirmative and was answered first in \cite{G2025}.
We shall need the follwing lemma.

\begin{lemma}\label{lem:amelia} Let $[B|M]$ be a matched pair where $B$ is a Boolean algebra.
Suppose that $m \equiv_{ef} n$. Then there exists a $p \in M$ such that
$p \equiv_{e} m$ and $p \equiv_{f} n$.
Furthermore, if $p' \in M$ is any element such that 
$p' \equiv_{e} m$ and $p' \equiv_{f} n$ then
$p \equiv_{e + f} p'$.
\end{lemma}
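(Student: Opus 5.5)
Both parts will be proved directly from the Boolean matched pair axioms, chiefly (MP6), (MP11) and (MP12), using only the Boolean algebra identity $e = ef + e\bar{f}$ and its variants.

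\emph{Existence.} Apply (MP12) to $m,n$ and the element $e$. This gives $p \in M$ with $p \equiv_{e} m$ and $p \equiv_{\bar{e}} n$. The first required relation, $p \equiv_{e} m$, then holds already.

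For $p \equiv_{f} n$, write $f = ef + \bar{e}f$.
\begin{itemize}
\item On $\bar{e}f$: since $\bar{e}f \leq \bar{e}$, (MP6) turns $p \equiv_{\bar{e}} n$ into $p \equiv_{\bar{e}f} n$.
\item On $ef$: since $ef \leq e$, (MP6) turns $p \equiv_{e} m$ into $p \equiv_{ef} m$. The hypothesis gives $m \equiv_{ef} n$, so transitivity yields $p \equiv_{ef} n$.
\end{itemize}
Finally (MP11) glues the two pieces to give $p \equiv_{ef + \bar{e}f} n$, that is, $p \equiv_{f} n$.

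\emph{Uniqueness up to $\equiv_{e+f}$.} Let $p'$ also satisfy $p' \equiv_{e} m$ and $p' \equiv_{f} n$.
\begin{itemize}
\item By symmetry and transitivity of $\equiv_{e}$, we get $p \equiv_{e} p'$.
\item Likewise $p \equiv_{f} n \equiv_{f} p'$, so $p \equiv_{f} p'$.
\end{itemize}
(MP11) then gives $p \equiv_{e+f} p'$ immediately.

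\emph{Main obstacle.} The only slightly delicate step is the decomposition $f = ef + \bar{e}f$ in the existence part. It is needed because (MP12) controls $p$ only on $e$ and on $\bar{e}$, not on $f$ itself. Choosing to apply (MP12) with $e$, not $f$, is what makes the hypothesis $m \equiv_{ef} n$ enter exactly on the overlap $ef$.

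Throughout, I use that each $\equiv_{e}$ is an equivalence relation, which is part of (MP4). I also use that (MP11) is applied to equivalences on the same pair of elements, which is how it is stated.
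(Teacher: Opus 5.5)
Your proof is correct and follows the paper's argument step for step. You apply (MP12) with $e$, split $f = ef + \bar{e}f$, restrict along each piece with (MP6), pass through the hypothesis $m \equiv_{ef} n$ by transitivity, and glue with (MP11); uniqueness then comes from transitivity on $e$ and on $f$ followed by (MP11). Incidentally, you state the intermediate relation correctly as $p \equiv_{ef} n$ before gluing, where the paper's write-up has a slip ($p \equiv_{ef} m$).
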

\begin{proof} By axiom (MP12), there exists $p \in M$ such that $p \equiv_{e} m$ and $p \equiv_{\bar{e}} n$.
Observe that $f = ef + \bar{e}f$.
By axiom (MP6), we have that $p \equiv_{ef} m$ and $p \equiv{\bar{e}f}$.
But $m \equiv_{ef} n$ so that $p \equiv{ef} n$.
Thus $p \equiv_{ef} m$ and $p \equiv_{\bar{e}f} n$ and so by axiom (MP11),
we have that $p \equiv_{f} n$.
Suppose that $p' \equiv_{e} m$ and $p' \equiv_{f} n$.
Then by axiom (MP6), we have that $p' \equiv_{e} m$ and $p' \equiv_{f} n$.
Thus $p' \equiv_{e} p$ and $p' \equiv_{f} p$.
It follows from axiom (MP11),  that $p' \equiv_{e + f} p$ 
\end{proof}

We now have the following result.
This is proved in \cite{G2025} so we shall just sketch it here (though we take a slightly different tack).

\begin{theorem} Let $M$ be a monoid and $B$ a Boolean algebra. 
\begin{enumerate} 

\item Let $[B|M]$ be a matched pair. Then we can construct a Boolean left restriction monoid $S = S[B|M]$.
In addition, $[B|M] \cong [\mathsf{Proj}(S)|\mathsf{Tot}(S)]$.

\item Every Boolean left restriction monoid is isomorphic to a monoid of the form  $S[B|M]$.

\item The category of Boolean left restriction monoids is equivalent to the category of matched pairs. 

\end{enumerate}
\end{theorem}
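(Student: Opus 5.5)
Since a Boolean matched pair is in particular a matched pair in the sense of Section~2, the plan is to reuse the factorizable theorem of Section~2 and check only what Booleanness adds. For part (1), form $S = S[B|M]$ as in Section~2: elements $(e,[a]_{e})$, product $(e,[a]_{e})(f,[b]_{f}) = (e(a\ast f),[ab]_{e(a\ast f)})$, and $(e,[a]_{e})^{+} = (e,[1]_{e})$. That theorem already gives a factorizable left restriction monoid with $[B|M]\cong[\mathsf{Proj}(S)|\mathsf{Tot}(S)]$ via $e\mapsto(e,[1]_{e})$. This map is an order isomorphism onto $\mathsf{Proj}(S)$, because $(e,[1]_e)\leq(f,[1]_f)$ iff $e\leq f$. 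Hence (B1) holds, with zero $(0,[1]_{0})$; by (MP9) this is a two-sided zero, since $0(a\ast f)=0$ and $e(a\ast 0)=e\cdot 0=0$ using (MP2),(MP3) to get $a\ast 0\leq a\ast f$, so $a\ast 0 = 0$.

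For (B2), I will first compute right-compatibility. The condition $(e,[a]_e)\sim_r(f,[b]_f)$ unwinds to $a\equiv_{ef} b$. Lemma~\ref{lem:amelia} then supplies $p$ with $p\equiv_{e}a$, $p\equiv_{f}b$, and $[p]_{e+f}$ is unique. I define the join as $(e+f,[p]_{e+f})$. It lies above both elements by the order description from Section~2. It is least, because any upper bound $(g,[q]_g)$ has $e,f\leq g$ with $q\equiv_e a$ and $q\equiv_f b$, so $q\equiv_{e+f}p$ by (MP11).

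For (B3), right distributivity $(x\vee y)z=xz\vee yz$ reduces to comparing first coordinates: $(e+f)(p\ast g) = e(p\ast g)+f(p\ast g) = e(a\ast g)+f(b\ast g)$ by (MP8). The second coordinates then agree by the uniqueness part of Lemma~\ref{lem:amelia}. Left distributivity $z(x\vee y)=zx\vee zy$ uses (MP10) to get $c\ast(e+f)=c\ast e+c\ast f$, and (MP7) to transport $p\equiv_e a$ to $cp\equiv_{c\ast e}ca$. Uniqueness again identifies the classes. I expect (B3) to be the main obstacle: keeping track of which index each congruence lives at, and invoking (MP6) to restrict to smaller indices, is delicate bookkeeping.

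For part (2), a Boolean $S$ is factorizable by the preceding lemma. By Section~2 it is isomorphic, as a left restriction monoid, to $S[\mathsf{Proj}(S)|\mathsf{Tot}(S)]$, and Lemma~\ref{lem:anne} says this pair is Boolean. Joins in a Boolean left restriction monoid are determined by the order, and a left restriction isomorphism preserves the order. Hence it preserves binary joins, and this gives (2). For part (3), I will restrict the equivalence of Section~2 to these full subcategories, with morphisms of Boolean matched pairs required to have $\beta$ a Boolean algebra homomorphism. I must check two things: a join-preserving $\theta$ restricts to such a pair, since it preserves $0$ and $+$ on projections; and conversely the induced $\theta\colon(e,[a]_e)\mapsto(\beta(e),[\alpha(a)]_{\beta(e)})$ preserves joins. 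The latter follows because $\alpha(p)$ witnesses Lemma~\ref{lem:amelia} for $\alpha(a),\alpha(b)$ at $\beta(e),\beta(f)$, and $\beta(e+f)=\beta(e)+\beta(f)$.
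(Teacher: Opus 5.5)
Your overall route is the paper's: the join $(e+f,[p]_{e+f})$ comes from Lemma~\ref{lem:amelia}, and everything else is inherited from the factorizable theorem of Section~2. The paper simply cites \cite{G2025} for what you spell out in (B3) and parts (2) and (3).

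There is, however, a genuine gap in your check of (B1). For $(0,[1]_{0})$ to be a right zero you need $e(a\ast 0)=0$ for every $e$, i.e.\ $a\ast 0=0$, and your derivation of this fails. (MP2) and (MP3) only give $a\ast 0=(a\ast 0)(a\ast f)\leq a\ast f$ for all $f$. Concluding $a\ast 0=0$ would require some $f$ with $a\ast f=0$, and nothing provides one; (MP10) only returns the circular $a\ast 0\leq (a\ast f)(a\ast \bar f)=a\ast 0$.

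In fact $a\ast 0=0$ is not a consequence of (MP1)--(MP12) as listed. Take $B=\{0,1\}$ and $M=\{1,z\}$ with $z^{2}=z$, so $za=z$ for all $a$. Let $1\ast e=e$, $z\ast 0=z\ast 1=1$, $\equiv_{0}$ universal and $\equiv_{1}$ equality. All twelve axioms hold; for (MP7) with $m=z$, $e=0$ one needs $za=za'$, which is true. Yet $(1,[z]_{1})(0,[1]_{0})=(z\ast 0,[z]_{z\ast 0})=(1,[z]_{1})$. So $S[B|M]$ has no two-sided zero and is not Boolean.

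So $m\ast 0=0$ for all $m$ is not something to derive but a hypothesis to add. Equivalently, $M$ acts on $B$ by Boolean algebra homomorphisms, as in Garner's notion of matched pair, which is what the paper tacitly relies on when it defers to \cite{G2025}. The condition is also necessary, because for Boolean $S$ one has $m\ast 0=(m0)^{+}=0$; so adding it costs nothing in parts (2) and (3). With it your zero argument is immediate. The rest of your verification then goes through, including both distributive laws and the join-preservation of the induced morphisms.

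A minor wording point: the subcategories in part (3) are not full, since a left restriction homomorphism between Boolean monoids need not preserve joins. But you restrict the morphisms explicitly, and the comparison maps are isomorphisms, hence order isomorphisms, so the restricted equivalence still holds.
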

\begin{proof} Suppose that $(e,[a]_{e}) \sim_{r} (f,[b]_{f})$.
Then $a \equiv_{ef} b$.
By Lemma~\ref{lem:amelia}, there is an element $p \in M$ such that $p \equiv_{e} m$ and $p \equiv_{f} n$.
Consider the element $(e+f, [p]_{e+f})$.
It is now easy to show that
$$(e,[a]_{e}) \vee (f,[b]_{f}) = (e+f, [p]_{e+f}).$$
The remainder of the proof follows
\cite[Proposition 3.4, Theorem 3.5]{G2025}.
\end{proof}

\begin{center}
{\bf Supported actions of  Boolean left restriction monoids and `actions' of matched pairs}
\end{center}

We need to modify what we mean by a supported action in the case where we are acting by Boolean left restriction monoids.

If $S$ is a Boolean left restriction monoid, then a supported action of $S$ on a set $X \neq \varnothing$ is required to satisfy the following axioms {\em in addition to (E1) and (E2)}:
\begin{itemize}
\item[{\rm (E3).}] The poset $(X,\leq)$ has a mimimum element $z$ such that $s \cdot z = z$ for all $s \in S$.
\item[{\rm (E4).}] $0 \cdot x = z$ for all $x \in X$. 
\item[{\rm (E5).}] If $x \approx y$ then $x \vee y$ exists in $X$ and $p(x \vee y) = p(x) \vee p(y)$.
\item[{\rm (E6).}] If $x \approx y$ then $s \cdot (x \vee y) = s \cdot x \vee s \cdot y$.
\item[{\rm (E7).}] If $s \sim_{r} t$ then $(s \vee t) \cdot x = s \cdot x \vee t \cdot x$.
\end{itemize}

\begin{remark}{\em Observe that axiom (E6) makes sense since if $x \approx y$ then for any $s$ we have that
$s \cdot x \approx s \cdot y$. In addition, axiom (E7) makes sense since if $s \sim_{r} t$ then $s \cdot x \approx t \cdot x$
for any elements $s,t \in S$.}
\end{remark}

\begin{remark}{\em 
Supported actions of Boolean inverse monoids are defined in the same way except that in axiom (E7)
the condition $s \sim_{r} t$ is replaced by $s \sim t$.}
\end{remark}

\begin{remark}{\em Supported actions of Boolean right restriction monoids
will always be assumed to satisfy the additional axioms (E3)--(E7).}
\end{remark}

Let $(S,X)$ and $(S,X')$ be supported actions of the Boolean left restriction monoid $S$.
A {\em homomorphism} of such actions is a function $\theta \colon X \rightarrow X'$ 
which satisfies the two conditions for a homomorphism of supported actions and the two conditions below:
\begin{enumerate}
\item The smallest element of $X$ is mapped to the smallest element of $X'$.
\item Binary joins are preserved. 
\end{enumerate}

This means that for a fixed Boolean left restriction monoid $S$,
we can define the category whose objects are the supported actions $(S,X,p)$.
This category will also be denoted by $S$-{\bf Supp} though we stress 
that we assume that acxioms (E3)--(E7) also hold when $X$ is non-empty.

\begin{theorem}\label{them:CCV2} Let $S$ be a fixed Boolean left restriction monoid.
Then $\mathscr{C} = S$-{\bf Sup} is Cartesian closed.
\end{theorem}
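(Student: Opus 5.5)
The plan is to reuse the constructions from the proof of Theorem~\ref{them:CCV1} verbatim and to check that each of them satisfies the extra axioms (E3)--(E7). In addition, the universal arrows must be homomorphisms in the stronger sense: they must preserve the minimum element and binary joins. The categorical identities (uniqueness of the map into a product, uniqueness of the exponential transpose) are inherited from Theorem~\ref{them:CCV1}. Indeed, the morphisms of the Boolean category are a subclass of the old ones, so uniqueness comes for free. Thus only existence within the subclass needs to be checked.

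\emph{Terminal object.} I take $E = \mathsf{Proj}(S)$ with $s\cdot e = (se)^{+}$ and $p$ the identity.
\begin{itemize}
\item The order on $E$ coincides with the Boolean order, so (E3) holds with $z = 0$, and (E4) is $(0e)^{+}=0$.
\item Any two projections satisfy $e \approx f$, since $ef=fe$, and their join is $e+f$. So (E5) holds.
\item (E6) is $(s(e+f))^{+} = (se)^{+}+(sf)^{+}$, which follows from (B3) together with Lemma~\ref{lem:induction}.
\item (E7) is Lemma~\ref{lem:induction} applied to $(s\vee t)e = se\vee te$.
\end{itemize}
The unique map $\theta = q$ sends $z$ to $0$, because $p(z)\cdot z = z$ forces $q(z) = 0$ via (E4) and minimality. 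It preserves joins by (E5).

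\emph{Products.} I take $X\,\Box\,Y$ with minimum $(z_X,z_Y)$; note $p(z_X) = 0 = q(z_Y)$. The key observation is that $(x,y)\approx(x',y')$ in the product implies $x\approx x'$ and $y\approx y'$. The join is then $(x\vee x', y\vee y')$, which lies in the product because $p(x\vee x') = p(x)\vee p(x') = q(y)\vee q(y') = q(y\vee y')$. Axioms (E6) and (E7) hold componentwise, and the pairing map $\langle f,g\rangle$ preserves $z$ and joins because $f$ and $g$ do.

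\emph{Exponentials.} This is the main obstacle. I define $Y^X$ as before, but with $\hom$ now meaning Boolean homomorphisms $Se\,\Box\,X\to Y$. Here $Se$ is a Boolean supported action: it has minimum $0$, and joins of right-compatible elements exist by (B2) and stay in $Se$ by (B3). The order on $Y^X$ is $\theta\le\phi$ iff $\theta = \pi(\theta)\cdot\phi$. The minimum is the unique homomorphism $S0\,\Box\,X\to Y$, whose domain is $\{(0,z_X)\}$.

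For (E5) I take $\theta\in\hom(Se\,\Box\,X,Y)$ and $\phi\in\hom(Sf\,\Box\,X,Y)$ with $\theta\approx\phi$. I define $\psi$ on $S(e+f)\,\Box\,X$ by
$$\psi(t,x) = \theta(te,(te)^{+}\cdot x)\vee\phi(t\bar e f,(t\bar e f)^{+}\cdot x).$$
This uses $t = te\vee t\bar e f$, which holds when $t^{+}$-support lies under $e+f$, by Lemma~\ref{lem:people}-style decompositions. The compatibility $\theta\approx\phi$ guarantees that the two summands are $\approx$ in $Y$ and that $\psi$ restricts correctly to $\theta$ and $\phi$. I would check that $\psi$ is equivariant (using (LR6) and Lemma~\ref{lem:people}), that it preserves joins (using (E6) and (E7) in $X$ and $Y$), and that it is the least upper bound. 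Axioms (E6) and (E7) for $Y^X$ then follow from the formula $(a\cdot\theta)(t,x) = \theta(ta,(ta)^{+}\cdot x)$ together with (B3).

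It remains to verify three facts:
\begin{itemize}
\item $\hat g(z)$ is a Boolean homomorphism, which follows because $g$ preserves joins and $a\mapsto a\cdot z$ preserves joins by (E7);
\item $\hat g$ preserves joins and $z$, which follows from (E6) in $Z$ and the explicit formula;
\item $\mathbf{eval}$ preserves joins, which follows from the join formula for $\psi$.
\end{itemize}
The compatibility bookkeeping in the construction of $\psi$ is where I expect most of the work to lie.
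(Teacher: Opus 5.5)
\textbf{Verdict.} Your proposal is correct. It follows the paper's plan: keep the terminal object, $X\,\Box\,Y$ and $Y^{X}$ of Theorem~\ref{them:CCV1}, then check (E3)--(E7) and the extra morphism conditions. At the one substantive step, joins in $Y^{X}$, you take a different and more accurate route than the paper.

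\textbf{Comparison with the paper.} The paper views elements of $Y^{X}$ as partial maps on $S\,\Box\,X$. This correctly identifies $\le$ with restriction and $\approx$ with agreement on $S(ef)\,\Box\,X$, which is what makes its proof quick. It then asserts $\theta\vee\phi=\theta\cup\phi$. However, $\theta\cup\phi$ is defined only on $(Se\cup Sf)\,\Box\,X$, which is in general a proper subset of $S(e+f)\,\Box\,X$. For instance, let $S$ be the partial maps of $\{1,2\}$, let $e,f$ be the two atoms of $\mathsf{Proj}(S)$, and let $X=\mathsf{Proj}(S)$. Then $(1,1)\in S(e+f)\,\Box\,X$, but $1\notin Se\cup Sf$. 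So the union is not itself an element of $Y^{X}$. Your $\psi$, glued along $t=te\vee t\bar e f$, is exactly the unique join-preserving extension of $\theta\cup\phi$ to $S(e+f)\,\Box\,X$. It therefore supplies what the paper's one-line claim omits. You also address (E6), (E7) and the universal arrows, which the paper leaves unchecked.

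\textbf{Two small corrections.}
\begin{enumerate}
\item The decomposition $t=te\vee t\bar e f$ comes from $t=t(e+f)$, i.e.\ from $t\in S(e+f)$, together with (B3) and $e+f=e\vee\bar e f$. This is a condition on the right of $t$, not on $t^{+}$.
\item The two summands in $\psi(t,x)$ are $\approx$ whether or not $\theta\approx\phi$. By Lemma~\ref{lem:people}, $(t\bar e)^{+}=\overline{(te)^{+}}\,t^{+}$, so their supports are orthogonal. Compatibility is needed only to show that $\psi$ restricts to $\phi$ on $Sf\,\Box\,X$. There $te=tef$, so the first summand is a value of $\phi$, and join-preservation of $\phi$ recombines the two pieces.
\end{enumerate}

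\textbf{Much of your bookkeeping is automatic.} In a supported action, $u\le v$ with $p(u)=p(v)$ forces $u=v$, and both the action and homomorphisms preserve $\le$. Hence any homomorphism between Boolean supported actions preserves:
\begin{itemize}
\item the minimum, since $\theta(0\cdot x)=0\cdot\theta(x)$;
\item binary joins, since $\theta(x)\vee\theta(y)\le\theta(x\vee y)$ and both sides have support $p(x)\vee p(y)$.
\end{itemize}
So the Boolean category is full in $S$-{\bf Sup}, and your Boolean $\hom$-sets coincide with those of Theorem~\ref{them:CCV1}. Your checks on $q$, the pairings, $\hat g(z)$, $\hat g$, $\mathbf{eval}$, and the closure of $Y^{X}$ under the action all come for free. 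The same argument, combined with (B3) and Lemma~\ref{lem:induction}, derives (E6) and (E7) from (E5). For $Y^{X}$, only the minimum and your $\psi$ require genuine work.
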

\begin{proof} This is a refinement of Theorem~\ref{them:CCV1}.  
The terminal object belongs to this category.
If $(S,X,p)$ and $(S,X',p')$ are supported actions
then so too is $(S,X \,\Box \,X',r)$.
It remains to look at exponentials.
Observe that if $e \leq f$ then $Se \, \Box \, X \subseteq Sf \, \Box \, x$.
We may therefore regard $Y^{X}$ as a set of partial functions of $S \, \Box \, X$.
Observe that $\theta \leq \phi$ iff $\theta \subseteq \phi$
and that $\theta \approx \phi$ when $\theta$ and $\phi$ agree on their intersection.
Therefore $\theta \vee \phi = \theta \cup \phi$, when defined.
the zero element is $\zeta \colon \{ 0\} \, \Box \, X \rightarrow Y$ given by
$\zeta \colon \{(0,z)\} \rightarrow \{ z\}$.
\end{proof}

Let $B$ be a Boolean algebra and $M$ a monoid such that $[B|M$] is a matched pair.
Let $X$ be any set.
We say that $X$ is a {\em $[B|M]$-set} if the following axioms hold:
\begin{itemize}
\item[{\rm (MPA1).}] The monoid $M$ acts on the set $X$.
\item[{\rm (MPA2).}] For each $e \in E$ there is an equivalence relation $\equiv_{e}$ on $X$.
\item[{\rm (MPA3).}] $\equiv_{1}$ is the identity equivalence.
\item[{\rm (MPA4).}] If $f \leq e$ then $x \equiv_{e} y$ implies that $x \equiv_{f} y$.
\item[{\rm (MPA5).}] If $m \equiv_{e} n$ in $M$ and $x \in X$ then $m \cdot x \equiv_{e} n \cdot x$.
\item[{\rm (MPA6).}] If $x \equiv_{e} y$ in $X$ then for any $m \in M$ we have that $m \cdot x \equiv_{m \ast e} m \cdot y$.
\item[{\rm (MPA7).}] $\equiv_{0}$ is the universal equivalence.
\item[{\rm (MPA8).}] if $x \equiv_{e} x'$ and $x \equiv_{f} x'$ then $x \equiv_{e+f} x'$. 
\item[{\rm (MPA9).}] Given any $x,y \in X$ and any $e \in B$, then there exists a $z \in X$ such that $z \equiv_{e} x$ and $z \equiv_{\bar{e}} y$. 
\end{itemize}
These are the same axioms as in Section~3, but augmented by the axioms (MPA7), (MPA8) and (MPA9) to take account of the extra structure in a Boolean algebra.
{\em Homomorphisms} of such $[B|M]$-sets are defined as before.

\begin{lemma} Let $(S,X,p)$ be a factorizable supported action of a Boolean left restriction monoid.
\begin{enumerate}
\item If $X_{1}$ is empty then $X$ is empty.
\item $X_{1}$ is non-empty  iff $p$ is surjective.
\end{enumerate}
\end{lemma}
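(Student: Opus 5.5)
Both parts follow from the definition of factorizability together with axiom (E2) and the Boolean structure on $\mathsf{Proj}(S)$.

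For part (1), suppose $X_{1}$ is empty and, for a contradiction, that $X$ contains some element $y$. By factorizability there is an $x \in X_{1}$ with $y \leq x$. That element would lie in $X_{1}$, which is impossible. So $X$ is empty. In the paper's convention supported actions of Boolean monoids are on non-empty sets, so this part really says that $X_{1} \neq \varnothing$ whenever $X \neq \varnothing$. I will record it in that form for use in part (2).

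For part (2), the direction from surjectivity is immediate: if $p$ is surjective then $1 = p(x)$ for some $x$, and that $x$ lies in $X_{1}$.

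For the converse, suppose $x \in X_{1}$, so that $p(x) = 1$, and let $e$ be any projection. I consider $e \cdot x$. By axiom (E2),
$$p(e \cdot x) = (e\,p(x))^{+} = (e1)^{+} = e^{+} = e,$$
since $e$ is a projection. Hence every $e \in \mathsf{Proj}(S)$ lies in the image of $p$, and $p$ is surjective. This covers $e = 0$ as well, giving $p(0 \cdot x) = 0$, which is consistent with (E4), since $0 \cdot x = z$ forces $p(z) = 0$.

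I expect no real obstacle. The only point needing care is the vacuous reading of part (1), and the computation above shows that non-emptiness of $X_{1}$ alone already gives surjectivity via (E2).
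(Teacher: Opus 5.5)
Your proof is correct and follows the paper's own argument. Part (1) is the immediate consequence of factorizability, and in part (2) you take $x \in X_{1}$ and use (E2) to get $p(e \cdot x) = (e\,p(x))^{+} = e$ for every projection $e$, with the converse trivial. One small quibble: the paper does allow $X = \varnothing$, since (E3)--(E7) are imposed only on non-empty $X$, so part (1) is not vacuous. Your argument covers that case anyway.
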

\begin{proof} (1) Immediate
(2) Suppose that $X_{1}$ is non-empty.
Let $x \in X_{1}$ and let $e$ be any projection.
Then $p(e \cdot x) = e$.
Thus $p$ is surjective.
Conversely, if $p$ is surjective then $X_{1} = p^{-1}(1)$ is automatcally non-empty.
\end{proof}

We now have the following version of Theorem~\ref{them:mary} for Boolean left restriction monoids.

\begin{theorem}\label{them:marytwo} Let $S$ be a Boolean left restriction monoid.  
The category $S$-{\bf FSup} is equivalent to the category $[\mathsf{Proj}(S)|\mathsf{Tot}(S)]$-{\bf Set}.
\end{theorem}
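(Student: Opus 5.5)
The plan is to refine the two functors from the proof of Theorem~\ref{them:mary} so that they respect the extra Boolean structure on both sides. Here $S$-{\bf FSup} means factorizable supported actions satisfying (E3)--(E7), with homomorphisms that also preserve the minimum and binary joins. $[\mathsf{Proj}(S)|\mathsf{Tot}(S)]$-{\bf Set} means $[B|M]$-sets satisfying (MPA1)--(MPA9), where $B = \mathsf{Proj}(S)$ is a Boolean algebra by (B1) and $M = \mathsf{Tot}(S)$. By Lemma~\ref{lem:anne} this is a matched pair in the Boolean sense. The underlying functors and the natural isomorphisms are those already built in Theorem~\ref{them:mary}. What remains is to check that objects and morphisms land in the correct subcategories, and that homomorphisms between the Boolean actions automatically preserve the extra structure.

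\textbf{From actions to $[B|M]$-sets.} Given $(S,X,p)$, I take $Y = X_{1}$ with $x \equiv_{e} y$ iff $e\cdot x = e \cdot y$. Axioms (MPA1)--(MPA6) hold as before, so I check the new ones.
\begin{enumerate}
\item (MPA7): by (E4), $0\cdot x = z = 0 \cdot y$.
\item (MPA8): if $e\cdot x = e\cdot x'$ and $f \cdot x = f\cdot x'$, write $e+f = e \vee \bar{e}f$, an orthogonal hence right-compatible join. Then (E7) gives $(e+f)\cdot x = e\cdot x \vee \bar{e}f\cdot x$, and the same for $x'$. Since $\bar{e}f \leq f$, we have $\bar{e}f\cdot x = \bar{e}f\cdot x'$, so the two joins agree.
\item (MPA9): given $x,y \in X_{1}$, the elements $e\cdot x$ and $\bar{e}\cdot y$ satisfy $\approx$, because their supports are orthogonal and $0\cdot(\cdot) = z$. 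Put $w = e\cdot x \vee \bar{e}\cdot y$. By (E5), $p(w) = e+\bar e = 1$. By (E6), $e\cdot w = e\cdot x \vee e\bar{e}\cdot y = e\cdot x \vee z = e \cdot x$, and similarly $\bar{e}\cdot w = \bar{e}\cdot y$.
\end{enumerate}
Morphisms restrict exactly as in Theorem~\ref{them:mary}.

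\textbf{From $[B|M]$-sets to actions.} Given $Y$, I form $X = \bigcup_{e} Y/\equiv_{e}$ with the action $s\bullet[x]_{e} = [\hat{s}\cdot x]_{(se)^{+}}$. Here $\hat s$ can be taken to be $s \vee \overline{s^{+}}$ because $S$ is factorizable. The order on $X$ is the one described in Lemma~\ref{lem:agatha}.
\begin{enumerate}
\item (E3) and (E4): by (MPA7), $Y/\equiv_{0}$ is a single class, and this class is the minimum $z$ with $s\bullet z = z$. Also $0\bullet[x]_{e} = [\,\cdot\,]_{0}$.
\item $\approx$: two elements $[x]_{e}$ and $[y]_{f}$ satisfy $\approx$ iff $x \equiv_{ef} y$.
\item (E5): this is the key construction. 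I mimic Lemma~\ref{lem:amelia}, using (MPA9) and (MPA8) in place of (MP12) and (MP11). This produces $w \in Y$ with $w\equiv_{e} x$ and $w \equiv_{f} y$, unique modulo $\equiv_{e+f}$. I then show that $[w]_{e+f}$ is the join, using (MPA8) to prove it is least among upper bounds.
\item (E6) and (E7): these follow by computing both sides and using the uniqueness clause of that lemma. For (E7), I need that $\widehat{s\vee t}$ agrees with $\hat s$ modulo $s^{+}$ and with $\hat t$ modulo $t^{+}$; then (MPA5) applies. For (E6), I use (MPA6) together with $(s(e+f))^{+} = (se)^{+}+(sf)^{+}$, which is Lemma~\ref{lem:induction}.
\end{enumerate}
Functoriality needs the induced $\theta([y]_{e}) = [\alpha(y)]_{e}$ to preserve joins and the minimum. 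This holds because $\alpha(w)$ is again a witness of the join, and the uniqueness clause applies.

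\textbf{Isomorphisms.} The comparison isomorphisms from Theorem~\ref{them:mary} are bijective equivariant maps that preserve support. Their inverses are also homomorphisms of supported actions, so they are order isomorphisms for the order $x = p(x)\cdot y$. Order isomorphisms automatically preserve the minimum and binary joins. This is also why no extra condition is needed on morphisms on the $[B|M]$-side.

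\textbf{Main obstacle.} I expect the main obstacle to be (E5)--(E7) for the constructed $X$, and in particular the least-upper-bound property of $[w]_{e+f}$ together with the distributivity (E6) under $s\bullet$. Here the bookkeeping with (MPA4)--(MPA6), Lemma~\ref{lem:people} and Lemma~\ref{lem:induction} has to line up exactly.
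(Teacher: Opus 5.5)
Your proposal is correct and takes essentially the same route as the paper. In one direction you restrict to $X_{1}$; in the other you build $\bigcup_{e} Y/\equiv_{e}$, verify (MPA7)--(MPA9) and (E3)--(E7) using the Lemma~\ref{lem:amelia}-style witness $w$ (unique modulo $\equiv_{e+f}$) for joins and a common total element for (E7), and reuse the comparison isomorphisms of Theorem~\ref{them:mary}. Your extra details fill in steps the paper only calls easy: you check $p(w)=1$ in (MPA9) via (E5), and you note that the comparison isomorphisms are order isomorphisms and so automatically preserve the minimum and binary joins. It is also worth saying explicitly that the empty $[B|M]$-set corresponds to the empty action, which is exempt from (E3)--(E7).
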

\begin{proof} Put $M = \mathsf{Tot}(S)$, $B = \mathsf{Proj}(S)$. 
We first define a functor from $S$-{\bf FSup} to $[B|M]$-{\bf Set}.
In fact, $[B|M]$ is a matched pair by Lemma~\ref{lem:anne}.
Suppose $(S,X,p)$ is a supported action.
Put $X_{1} = p^{-1}(1)$.
Then $X_{1}$ is a $[B|M]$-set by Theorem~\ref{them:mary}.
It just remains to check that the additional axioms hold.
It is straightforward to show that (MPA7) and (MPA8) hold.
To show that axiom (MPA9) holds,
let $e \in B$ and $x,y \in X_{1}$.
Recall that $x \equiv_{e} y$ iff $e \cdot x = e \cdot y$.
Observe that
$e \cdot x \approx \bar{e} \cdot y$.
Thus $z = e \cdot x \vee \bar{e} \cdot x$ is well-defined.
But $z \equiv_{e} x$ and $z \equiv_{\bar{e}} y$.
Thus (MPA9) really holds.
Let $(S,X,p)$ and $(S,X',p')$ be supported actions
and
let $\theta \colon X \rightarrow X'$ be a homomorphism of supported actions.
The proof that $\theta | X_{1} \colon X_{1} \rightarrow X'_{1}$ defines a homomorphism of $[B|M]$-sets
is now routine.
It is now easy to see that we have defined a functor from the category of supported actions of $S$
to the category {\bf $[B|M]$-sets}.

We now define a functor from $[B|M]$-{\bf Set} to  $S$-{\bf FSup}.
This will follow Theorem~\ref{them:mary}.
Let $Y$ be a $[B|M]$-set.
We assume that $Y$ is non-empty.
As before, 
define
$$X = \bigcup_{e \in \mathsf{Proj}(S)} Y/\equiv_{e}$$
and
$p \colon X \rightarrow \mathsf{Proj}(S)$
by $p([x]_{e}) = e$.
Define 
$$s \bullet [x]_{e} = [\hat{s} \cdot x]_{(se)^{+}}.$$
This defines a supported action of $S$ on $X$, but we now need to show that the axioms
(E3)--(E7) hold for $X$.

(E3) holds: 
Put $z = [y]_{0}$ where $y \in Y$ is any element.
By (MPA7), it does not matter which element $y$ we choose.
From the definition of the action, this really is the minimum element of the poset $X$.
Again from the definition of the action, we have proved that this axiom holds.

(E4) holds: clear.

(E5) holds.
We need a preliminary result first:
if $x \equiv_{ef} y$ then there exists a $w \in X$ such that
$w \equiv_{e} x$ and $w \equiv_{f} y$.
Such a $w$ is unique upto $\equiv_{ef}$-equivalence.
By (MPA9)
there is a $w \in X$ such that $w \equiv_{e} x$ and $w \equiv_{\bar{e}} y$.
In a Boolean algebra, we have that $f = ef \vee \bar{e}f$.
By (MPA4),
we have that $w \equiv_{ef} x$ and $w \equiv_{\bar{e}f} y$. 
It follows that $w \equiv_{ef} y$ and $w \equiv_{\bar{e}f} y$.
By (MPA8), we have that that $w \equiv_{f} y$.
Now, suppose that there exists a $w' \in X$ such that
$w' \equiv_{e} x$ and $w' \equiv_{f} y$.
Then $w \equiv_{e} w'$.
But $ef \leq e$.
Thus by (MPA4),
we have that  $w \equiv_{ef} w'$, as required.

We can now prove that axiom (E5) holds.
Suppose that  $[x]_{e} \approx [y]_{f}$.
We prove that
$[w]_{e + f} = [x]_{e} \vee [y]_{f}$
where $w \in X$ is any element such that 
$w \equiv_{e} x$ and $w \equiv_{f} y$.
Observe that $x \equiv_{ef} y$ means precisely $[x]_{e} \approx [y]_{f}$.
Thus if $[x]_{e} \approx [y]_{f}$, then, using the element $w$ guaranteed above,
we have that $[x]_{e}, [y]_{f} \leq [w]_{e + f}$.
Suppose that $[x]_{e}, [y]_{f} \leq [w']_{i}$.
Then $e \leq i$ and $x \equiv_{e} w'$, and $f \leq i$ and $y \equiv_{f} w'$.
Thus $e + f \leq i$, and $w \equiv_{e} z$ and $w \equiv_{f} z$.
By (MP8), $w' \equiv_{e + f f} z$.
It follows that $[w]_{e + f} \leq [w']_{i}$.
We have therefore proved that $[w]_{e + f} = [x]_{e} \vee [y]_{f}$.

(E6) holds.
We show that if $[x]_{e} \approx [y]_{f}$ then 
$s \bullet ([x]_{e} \vee [y]_{f}) = s \bullet [x]_{e} \vee s \bullet [y]_{f}$.
Let $s = s^{+}\hat{s}$.
Then $s \bullet ([x]_{e} \vee [y]_{f}) = [\hat{s} \cdot z]_{(s(e + f))^{+}}$.
Also, $s \bullet [x]_{e} = [\hat{s} \cdot x]_{(se)^{+}}$ and $s \bullet [y]_{f} = [\hat{s} \cdot y]_{(sf)^{+}}$.
From $z \equiv_{e} x$ and $z \equiv_{f} y$ and using (MPA6),
we have that $\hat{s} \cdot z \equiv_{\hat{s} \ast e} \hat{s} \cdot x$ and $\hat{s} \cdot z \equiv_{\hat{s} \ast f} \hat{s} \cdot y$.
Observe that $(se)^{+} \leq (\hat{s}e)^{+}$ and $(sf)^{+} \leq (\hat{s}f)^{+}$.
Thus by (MPA4), we have that 
$\hat{s} \cdot z \equiv_{(se)^{+}} \hat{s} \cdot x$ and $\hat{s} \cdot z \equiv_{(sf)^{+}} \hat{s} \cdot y$.
The result now follows, from the following calculation
$$(s(e+f))^{+} = (se \vee sf)^{+} = (se)^{+} + (sf)^{+}.$$

(E7) holds.
Let $s \vee t = (s \vee t)^{+}\hat{u}$ where $\hat{u} \in M$.
Thus $s \vee t = s^{+}\hat{u} \vee t^{+}\hat{u}$.
But $s = s^{+}\hat{u}$ and $t = t^{+}\hat{u}$.
The result now follows.

Let $X$ and $X'$ be $[B|M]$-sets.
Let $\alpha \colon X \rightarrow X'$ be a homomorphism of $[B|M]$-sets.
Put $Y = \bigcup_{e \in \mathsf{Proj}(S)} X/\equiv_{e}$
and
$Y' = \bigcup_{e \in \mathsf{Proj}(S)} X'/\equiv_{e}$.
Define $\theta \colon Y \rightarrow Y'$ by $\theta ([x]_{e}) = [\alpha (x)]_{e}$.
Then this is a homomorphism of supported actions by Theorem~\ref{them:mary}.
However, we also have to check that $\theta$ maps the smallest element of $Y$ to the smallest element of $Y'$
and that binary joins in $Y$ are preserved by $\theta$.
The proofs of these are both straightforward from the definition of $\alpha$.
We have therefore defined a functor from $[B|M]$-{\bf Set} to  $S$-{\bf FSup}.

To prove that we have an equivalence of categories, it is enough to prove that 
every supported action of a Boolean left restriction monoid is isomorphic to an induced supported action.
By Theorem~\ref{them:mary} there is an isomorphism,
but we need to check that minimum elements are mapped
and binary joins are preserved.
Both of these are easy.
\end{proof}

Let $S$ be a Boolean left restriction monoid.
Put $M = \mathsf{Tot}(S)$ and $B = \mathsf{Proj}(S)$.
Although not strictly necessary, we now sketch out the proof that the exponential object 
in the category $S$-{\bf FSup} gives rise to the exponential object in the category $[\mathsf{Proj}(S)|\mathsf{Tot}(S)]$-{\bf Set},
as described in \cite[Proposition 7.11]{G2024},
under the functor described in the theorem above.
Suppose that $(S,X,p)$ and $(S,Y,q)$ are factorizable supported actions.
We have that $(Y^{X})_{1} = \mbox{hom}(S \, \Box \, X,Y)$.
The set $S \,\Box \, X$ consists of those ordered pairs $(s,x)$ where $s^{+} = p(x)$.
Let $\theta \in \mbox{hom}(S \, \Box \, X,Y)$.
Define $\theta'$ to be the restriction of $\theta$ to the set $M \times X_{1}$.
Observe that $\theta' \colon M \times X_{1} \rightarrow Y_{1}$.
We may describe $\theta$ in terms of $\theta'$.
We calculate $\theta (s,x)$.
In $X$, we have that $x = p(x) \cdot x'$ where $x' \in X_{1}$.
In $S$, we may write $s = s^{+}\hat{s}$ where $\hat{s} \in M$.
It follows that $\theta (s,x) = s^{+} \theta (\hat{s},x')$ where $(\hat{s}, x') \in M \times X_{1}$.
Thus $\theta (s,x) = s^{+} \cdot \theta' (\hat{s}, x')$.
It is clear that $\theta'$ is a homomorphism of the $[B|M]$-sets $X_{1}$ and $Y_{1}$.
On the other hand, if $\phi' \colon M \times X_{1} \rightarrow Y_{1}$ is a homomorphism of $[B|M]$-sets,
define $\phi \colon S \, \Box \, X \rightarrow Y$
by $\phi (s,x) = s^{+} \cdot \phi' (\hat{s}, x')$. 
We need to show that $\phi$ is well-defined.
Suppose that $s = s^{+}\hat{s} = s^{+}\hat{t}$, where $\hat{s}, \hat{t} \in M$,
and $x = p(x) \cdot x' = p(x) \cdot x''$, where $x',x'' \in X_{1}$. 
Then $(\hat{s},x') \equiv_{s^{+}} (\hat{t}, x'')$.
But then we have that $\phi' (\hat{s},x') \equiv_{s^{+}} \phi' (\hat{t}, x'')$,
and the definition of $\equiv_{e}$ gives us the result.
In this way, we have established a bijection between the elements of $(Y^{X})_{1}$
and the elements of $M \times X_{1} \rightarrow Y_{1}$. 
This is the hard part of proving our result.

\section{Supported actions of \'etale Boolean restriction monoids}

Let $S$ be a Boolean left restriction monoid.
An element $a$ is called a {\em partial unit} if there is an element $b$ such that
$ab = a^{+}$ and $ba = b^{+}$.
The set of partial units of $S$ is denoted by $\mathsf{Inv}(S)$.
This subset forms a Boolean inverse monoid that contains all the projections of $S$.
A Boolean left restriction monoid is said to be  {\em \'etale} if every element of $S$
is a right-compatible join of partial units.
See \cite{L2025}, for a whole paper dedicated to the structure of \'etale Boolean left restriction monoids.

We shall compare supported actions of $S$ and supported actions of $\mathsf{Inv}(S)$.
Clearly, if there is a supported action of $S$ on $Y$ then there is a supported action of 
$\mathsf{Inv}(S)$ on $Y$.
We shall prove  the converse: that a supported action of $\mathsf{Inv}(S)$ on $Y$, induces a unique supported action of
$S$ on $Y$.

Let $S$ be a Boolean left restriction monoid and let $a \in S$.
Then, by assumption, there exist $a_{i} \in \mathsf{Inv}(S)$ such that
$a = \bigvee_{i=1}^{m} a_{i}$, where $a_{i} \sim_{r} a_{j}$.
It is therefore natural to define
$$a \cdot y = \bigvee_{i=1}^{m} a_{i} \cdot y.$$
The first step in showing that this is well-defined is to show that the join on the right hand side is actually defined in $Y$.

\begin{lemma}\label{lem:alice} Let $S$ be a Boolean left restriction monoid.
Suppose that $(\mathsf{Inv}(S),Y,p)$ is a supported action of a Boolean inverse monoid.
If $a_{i} \sim_{r} a_{j}$ and $y \in Y$ then $a_{i} \cdot y \approx a_{j} \cdot y$.
\end{lemma}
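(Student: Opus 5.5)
We need to show that $p(a_i\cdot y)\cdot(a_j\cdot y) = p(a_j\cdot y)\cdot(a_i\cdot y)$. The plan is to rewrite both sides using axioms (E1) and (E2) for the supported action of $\mathsf{Inv}(S)$, and then invoke right-compatibility $a_i^{+}a_j = a_j^{+}a_i$ in $S$. One point needs care first: since $a_i,a_j\in\mathsf{Inv}(S)$, the products we form, such as $(a_iq)^{+}a_j$ with $q=p(y)$, must lie in $\mathsf{Inv}(S)$ so that the action is defined on them. This holds because $\mathsf{Inv}(S)$ is an inverse submonoid containing all projections, so products of projections with partial units are partial units.

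\textbf{Computing the supports.} Put $q = p(y)$. By (E2),
$$p(a_i\cdot y) = (a_iq)^{+}, \qquad p(a_j\cdot y) = (a_jq)^{+},$$
where the plus computed in $\mathsf{Inv}(S)$ agrees with that of $S$. Then
$$p(a_i\cdot y)\cdot (a_j\cdot y) = \bigl((a_iq)^{+}a_j\bigr)\cdot y .$$
By (E1) we have $y = q\cdot y$, so this equals $\bigl((a_iq)^{+}a_jq\bigr)\cdot y$. By (LR6), $a_jq = (a_jq)^{+}a_j$, so the expression becomes
$$\bigl((a_iq)^{+}(a_jq)^{+}a_j\bigr)\cdot y .$$
By symmetry, the other side is $\bigl((a_jq)^{+}(a_iq)^{+}a_i\bigr)\cdot y$. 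Since projections commute (LR3), it suffices to show
$$(a_iq)^{+}(a_jq)^{+}a_j = (a_iq)^{+}(a_jq)^{+}a_i$$
as elements of $S$.

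\textbf{Main step: using right-compatibility.} This is where the hypothesis enters and is the only nontrivial step. Since $(a_iq)^{+}\le a_i^{+}$ and $(a_jq)^{+}\le a_j^{+}$, we can write
$$(a_iq)^{+}(a_jq)^{+} = (a_iq)^{+}(a_jq)^{+}a_i^{+}a_j^{+}.$$
Hence
$$(a_iq)^{+}(a_jq)^{+}a_j = (a_iq)^{+}(a_jq)^{+}a_j^{+}a_i^{+}a_j .$$
Using $a_i^{+}a_j = a_j^{+}a_i$, this becomes $(a_iq)^{+}(a_jq)^{+}a_j^{+}a_j^{+}a_i$. Absorbing $a_j^{+}a_j^{+}$ into $(a_jq)^{+}$, and inserting $a_i^{+}$ back harmlessly, gives $(a_iq)^{+}(a_jq)^{+}a_i$, as required. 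Equality of the elements then gives equality of their actions on $y$, which proves $a_i\cdot y\approx a_j\cdot y$.

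The only subtlety to double-check is the inequality $(sq)^{+}\le s^{+}$. It follows from (LR5) and (LR2): $(sq)^{+} = (s^{+}sq)^{+}$, which lies below $s^{+}$, a standard consequence of the left restriction axioms.
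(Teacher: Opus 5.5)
Your proof is correct and is essentially the paper's argument: a direct computation from (E1), (E2), (LR6), the inequality $(sq)^{+}\le s^{+}$, commuting projections and the identity $a_i^{+}a_j=a_j^{+}a_i$. The only difference is order: you apply (LR6) to $a_jq$ to reach a symmetric prefix $(a_iq)^{+}(a_jq)^{+}$ before using compatibility, whereas the paper reduces $p(a_i\cdot y)\cdot(a_j\cdot y)$ straight to $a_i^{+}a_j\cdot y$ and finishes by symmetry. You also check explicitly that the products involved lie in $\mathsf{Inv}(S)$, which the paper leaves implicit.
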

\begin{proof} We are given that $a_{i}^{+}a_{j} = a_{j}^{+}a_{i}$.
We shall calculate $p(a_{i} \cdot y) \cdot (a_{j} \cdot y)$.
Using the properties of the support function $p$, we have that
$$p(a_{i} \cdot y) \cdot (a_{j} \cdot y)
= 
(a_{i}p(y))^{+}a_{j} \cdot y.$$
Now observe that $a_{i}p(y) \leq a_{i}$ and so
$(a_{i}p(y))^{+} \leq a_{i}^{+}$.
It follows that
$(a_{i}p(y))^{+}a_{j} = (a_{i}p(y))^{+}a_{i}^{+}a_{j}$.
However, we know that $a_{i}^{+}a_{j} = a_{j}^{+}a_{i}$, 
and so
$(a_{i}p(y))^{+}a_{i}^{+}a_{j}
=
(a_{i}p(y))^{+}a_{j}^{+}a_{i}$.
But projections commute, and so
$(a_{i}p(y))^{+}a_{j}^{+}a_{i}
=
a_{j}^{+}(a_{i}p(y))^{+}a_{i}$
We now invoke axiom (LR6), to get
$a_{j}^{+}(a_{i}p(y))^{+}a_{i}
= 
a_{j}^{+}a_{i}p(y)$.
But $a_{j}^{+}a_{i}p(y) = a_{i}^{+}a_{j}p(y)$.
This proves that $p(a_{i} \cdot y) \cdot (a_{j} \cdot y) = a_{i}^{+}a_{j} \cdot y$.
The result now follows.
\end{proof}

We now show that our definition does not depend on our choice of elements from $\mathsf{Inv}(S)$.

\begin{lemma} With the above notation, if $a = \bigvee_{i=1}^{m} a_{i}$ and $a = \bigvee_{j = 1}^{n} b_{j}$,
where $a_{i}, b_{j} \in \mathsf{Inv}(S)$,
then $\bigvee_{i=1}^{m} a_{i} \cdot y = \bigvee_{j=1}^{n} b_{j} \cdot y$.
\end{lemma}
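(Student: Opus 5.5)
The natural approach is to pass to a common refinement of the two decompositions. Put $c_{ij} = b_j^{+}a_i$, which equals $a_i^{+}b_j$ because $a_i, b_j \leq a$ are right-compatible.

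\emph{The $c_{ij}$ are partial units and refine both decompositions.} Each $c_{ij}$ is a restriction of the partial unit $a_i$ by a projection, so it is again a partial unit. Hence $c_{ij} \in \mathsf{Inv}(S)$, and any two of the $c_{ij}$ are right-compatible since all lie below $a$. Using (B3) and $a_i \leq a$,
$$a_i = a_i^{+}a = a_i^{+}\bigvee_j b_j = \bigvee_j a_i^{+}b_j = \bigvee_j c_{ij}.$$
By symmetry, $b_j = \bigvee_i c_{ij}$. These joins of partial units are right-compatible, and they are in fact compatible joins in the Boolean inverse monoid $\mathsf{Inv}(S)$. One should check that the join computed in $S$ agrees with the one in $\mathsf{Inv}(S)$; both are the same element, since $a_i$ lies in $\mathsf{Inv}(S)$.

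\emph{Moving the action through the joins.} The supported action of the Boolean inverse monoid $\mathsf{Inv}(S)$ satisfies its version of (E7), so
$$a_i \cdot y = \bigvee_j c_{ij}\cdot y \quad\text{and}\quad b_j \cdot y = \bigvee_i c_{ij}\cdot y.$$
Finite joins need a short induction on (E7), using that a join of pairwise compatible elements is compatible with the remaining ones.

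\emph{Rearranging the double join.} By Lemma~\ref{lem:alice}, all the elements $c_{ij}\cdot y$ are pairwise $\approx$. Therefore
$$\bigvee_i a_i\cdot y = \bigvee_i\bigvee_j c_{ij}\cdot y = \bigvee_{i,j} c_{ij}\cdot y = \bigvee_j b_j \cdot y,$$
provided iterated joins of pairwise $\approx$ families exist and are associative and commutative.

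\emph{Main obstacle.} The delicate step is this last one: showing that in $Y$ a finite family of pairwise $\approx$ elements has a join that is independent of bracketing. I would first prove that if $u \approx w$, $u \approx v$ and $w \approx v$, then $u\vee w \approx v$. This follows from (E6) with $s = p(v)$:
$$p(v)\cdot(u\vee w) = p(v)\cdot u \vee p(v)\cdot w = p(u)\cdot v \vee p(w)\cdot v,$$
and by (E7) the right-hand side equals $(p(u)\vee p(w))\cdot v = p(u\vee w)\cdot v$, using (E5). With this in hand, the binary joins extend to finite joins by induction. Associativity and commutativity then hold because a join is the least upper bound in the poset $(X,\leq)$.
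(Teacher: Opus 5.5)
Your proof is correct and rests on the same refinement $a_i=\bigvee_j a_i^{+}b_j$ as the paper's. The paper avoids rebracketing the double join: it notes $(a_i^{+}b_j)\cdot y\leq b_j\cdot y$, deduces $\bigvee_i a_i\cdot y\leq\bigvee_j b_j\cdot y$, and appeals to symmetry. Your auxiliary lemma (that $u\approx w$, $u\approx v$, $w\approx v$ imply $u\vee w\approx v$) supplies the existence of finite joins of pairwise $\approx$ families, which the paper uses without proof.
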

\begin{proof} We have that $a_{k} = \bigvee_{j=1}^{n} a_{k}^{+}b_{j}$ for each $1 \leq k \leq m$.
Observe that $(a_{i}^{+}b_{j}) \cdot y \leq b_{j} \cdot y$.
We deduce that $\bigvee_{i=1}^{m} a_{i} \cdot y \leq \bigvee_{j=1}^{n} b_{j} \cdot y$.
Symmetry delivers the proof of the other direction.
\end{proof}

Our definition is therefore well-defined.
It is easy to check that axioms (E1) and (E2) hold.
All the axioms (E3)--(E6) are easy to show.
We prove (E7).

\begin{lemma} With the above definitions, if $a \sim_{r} b$ then $(a \vee b) \cdot y = a \cdot y \vee b \cdot y$.
\end{lemma}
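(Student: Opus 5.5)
The plan is to reduce (E7) for $S$ to the join-compatibility of the $\mathsf{Inv}(S)$-action, using the well-definedness lemma just proved. Since $S$ is \'etale, write $a = \bigvee_{i=1}^{m} a_{i}$ and $b = \bigvee_{j=1}^{n} b_{j}$ with all $a_{i}, b_{j} \in \mathsf{Inv}(S)$, the $a_{i}$ pairwise right-compatible and the $b_{j}$ pairwise right-compatible. The first step is to show that the combined family $\{a_{1},\dots,a_{m},b_{1},\dots,b_{n}\}$ is pairwise right-compatible. Given $a \sim_{r} b$, we have $a_{i} \leq a$ and $b_{j} \leq b$. Since right-compatibility is inherited by elements beneath right-compatible elements, $a_{i} \sim_{r} b_{j}$. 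Concretely, $a_{i}^{+}b_{j} = a_{i}^{+}b_{j}^{+}b = a_{i}^{+}b_{j}^{+}a^{+}b = a_{i}^{+}b_{j}^{+}b^{+}a = b_{j}^{+}a_{i}^{+}a = b_{j}^{+}a_{i}$, using (LR3), $a_{i} = a_{i}^{+}a$ and $b_{j} = b_{j}^{+}b$.

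Next I would observe that, by (B2), (B3) and associativity of joins in the natural partial order, $a \vee b = \bigvee_{i} a_{i} \vee \bigvee_{j} b_{j}$. This is a finite right-compatible join of partial units, and it is a legitimate decomposition of $a \vee b$ of the kind used in the definition of the extended action. By the preceding lemma (independence of the decomposition), we may compute
$$(a \vee b)\cdot y = \Bigl(\bigvee_{i=1}^{m} a_{i}\cdot y\Bigr) \vee \Bigl(\bigvee_{j=1}^{n} b_{j}\cdot y\Bigr),$$
where all these joins exist in $Y$ by Lemma~\ref{lem:alice} together with (E5) for the $\mathsf{Inv}(S)$-action. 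By definition, the two bracketed joins are exactly $a \cdot y$ and $b \cdot y$. So the right-hand side is $a\cdot y \vee b \cdot y$, provided we know that a finite join of pairwise $\approx$-related elements can be regrouped this way.

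The main obstacle is precisely this regrouping. In $Y$ only binary joins of $\approx$-related pairs are guaranteed by (E5), so I must check two things: that $a\cdot y \approx b\cdot y$ (so that the binary join exists), and that an iterated join of a pairwise compatible finite family equals its least upper bound, independently of bracketing. For the first, I would argue by induction using (E5): any join of pairwise $\approx$ elements is $\approx$ to another such element. Equivalently, both $a\cdot y$ and $b\cdot y$ lie below $(a\vee b)\cdot y$, and elements with a common upper bound are $\approx$, as remarked in Section~3. For the second, I would note that once the least upper bound of the whole family exists, it equals the join of the sub-joins by the universal property of suprema in the poset $(Y,\leq)$. Finally, as in the proof of the previous lemma, I would verify that the left-hand computation really is an upper bound of each $a_{i}\cdot y$ and $b_{j}\cdot y$.
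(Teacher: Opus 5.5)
Your proposal is correct and follows essentially the same route as the paper. Both arguments decompose $a$ and $b$ into right-compatible joins of partial units, check the cross right-compatibility $a_{i} \sim_{r} b_{j}$, and compute $(a \vee b)\cdot y$ from the combined decomposition via the independence-of-decomposition lemma. Your regrouping step is an improvement in precision: $(a\vee b)\cdot y$ bounds both $a\cdot y$ and $b\cdot y$, so these are $\approx$, and the universal property of suprema then identifies $(a\vee b)\cdot y$ with $a\cdot y \vee b\cdot y$. This spells out what the paper leaves implicit, and it makes the paper's final reduction to $(s \vee t)\cdot y = s\cdot y \vee t\cdot y$ for partial units $s,t$ unnecessary.
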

\begin{proof} Suppose that $a = \bigvee_{i=1}^{m} a_{i}$ and $b = \bigvee_{j=1}^{n} b_{j}$ where
$a_{i}, b_{j} \in \mathsf{Inv}(S)$.
The first step is to prove  that $a_{i} \sim_{r} b_{j}$.
We are given that $b^{+}a = a^{+}b$.
But $a_{i} \leq a$.
Thus $b^{+}a_{i} \leq a^{+}b$.
But $b_{j} \leq b$.
Thus $b_{j}^{+}a_{i} \leq a^{+}b$.
Mutiply both sides by $a_{i}^{+}$ and use the fact that projections commute.
Thus $b_{j}^{+}a_{i} \leq a_{i}^{+}a^{+}b$.
But $a_{i}^{+} \leq a^{+}$.
Thus $b_{j}^{+}a_{i} \leq a_{i}^{+}b$.	 
Multiply both sides by $b_{j}^{+}$
and use the fact that projections commute.
Thus $b_{j}^{+}a_{i} \leq a_{i}^{+}b_{j}^{+}b$.	 
However, $b_{j} = b_{j}^{+}b$.
We have therefore show that 
$b_{j}^{+}a_{i} \leq a_{i}^{+}b_{j}$.	
The result now follows by symmetry. 
By Lemma~\ref{lem:alice}, this implies that $a_{i} \cdot y \approx b_{j} \cdot y$.
This means that $a_{i} \cdot y \vee b_{j} \cdot y$ is defined.
It follows that $a \cdot y \vee b \cdot y$ is defined.
The element $(a \vee b) \cdot y$ is defined by assumption.
It remains to prove equality.
By definition, $(a \vee b) \cdot y = \left( \bigvee  a_{i} \cdot y \right) \vee \left( \bigvee b_{j} \cdot y \right) = \bigvee a_{i} \cdot y \vee b_{j} \cdot y$.
The result will therefore be proved if we show that $(a_{i} \vee b_{j}) \cdot y = a_{i} \cdot y \vee b_{j} \cdot y$.
We shall prove this result by way of the following.
Let $s,t \in \mathsf{Inv}(S)$.
Supppose that $s \sim_{r} t$.
We prove that $(s \vee t) \cdot y = s \cdot y \vee t \cdot y$.
It is immediate that $s \cdot y \vee t \cdot y \leq (s \vee t) \cdot y$.
The result now follows when we observe that $y = p(y) \cdot y$.
\end{proof}

The proof of the following is now almost immediate by what we have proved in this section.

\begin{theorem} Let $S$ be an \'etale Boolean left restriction monoid.
Then the category of supported actions of $\mathsf{Inv}(S)$ is isomorphic
with the category of supported actions of $S$.
\end{theorem}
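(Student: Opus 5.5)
We will exhibit two functors that are mutually inverse on the nose: restriction $R$ from supported $S$-actions to supported $\mathsf{Inv}(S)$-actions, and extension $E$ in the other direction. Both are the identity on underlying sets, on supports and on morphisms, so the work is to check that each functor is well defined and that the two composites are identities.

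For $R$, take a supported action $(S,X,p)$ satisfying (E1)--(E7) and restrict it to $\mathsf{Inv}(S)$. This is a monoid action of $\mathsf{Inv}(S)$ on $X$. Since $\mathsf{Inv}(S)$ contains all projections of $S$, the support $p$ still lands in $\mathsf{Proj}(\mathsf{Inv}(S))$, and (E1)--(E6) are inherited verbatim. Axiom (E7) for the inverse monoid requires $(s\vee t)\cdot x = s\cdot x\vee t\cdot x$ for compatible $s\sim t$. Compatibility in $\mathsf{Inv}(S)$ implies right-compatibility, and the join of $s$ and $t$ in $\mathsf{Inv}(S)$ coincides with their join in $S$. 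I would cite \cite{L2025} for both facts: $\mathsf{Inv}(S)$ is closed under compatible joins. With these, (E7) follows from (E7) for $S$. A homomorphism of $S$-actions is in particular $\mathsf{Inv}(S)$-equivariant and still preserves supports, the minimum element and binary joins, so $R$ acts on morphisms as the identity.

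For $E$, we use the construction just given. Let $(\mathsf{Inv}(S),Y,p)$ be a supported action and define $a\cdot y=\bigvee a_i\cdot y$, where $a=\bigvee a_i$ with $a_i \in \mathsf{Inv}(S)$. By Lemma~\ref{lem:alice} the join exists, the lemma after it shows the definition does not depend on the decomposition, and the last lemma gives (E7). One must still check three things.
\begin{enumerate}
\item It is a monoid action. If $a=\bigvee a_i$ and $b=\bigvee b_j$, then $ab=\bigvee_{i,j} a_ib_j$ by (B3), and each $a_ib_j$ lies in $\mathsf{Inv}(S)$. Then $(ab)\cdot y=\bigvee a_i\cdot(b_j\cdot y)=a\cdot(b\cdot y)$, using (E6) for the inverse action to pull $a_i$ through the join $\bigvee_j b_j\cdot y$.
\item Axiom (E2): $p(a\cdot y)=\bigvee (a_ip(y))^{+}=(ap(y))^{+}$, by (E5) and Lemma~\ref{lem:induction}.
\item Axiom (E6) for $S$: this follows from (E6) for each $a_i$, together with the rearrangement of finite joins.
\end{enumerate}
Morphisms are handled the same way. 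An $\mathsf{Inv}(S)$-homomorphism $\theta$ preserves binary joins, so $\theta(a\cdot y)=\bigvee a_i\cdot\theta(y)=a\cdot\theta(y)$, and hence $\theta$ is $S$-equivariant.

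It remains to check the composites. $R\circ E$ is the identity because, for $a\in\mathsf{Inv}(S)$, we may take the trivial decomposition $a=a$. $E\circ R$ is the identity because, in a supported $S$-action, (E7) applied inductively forces $a\cdot y=\bigvee a_i\cdot y$ for any right-compatible decomposition $a=\bigvee a_i$. This is the uniqueness claim: the $S$-action is determined by its restriction. Since both functors are the identity on objects and morphisms, the categories are isomorphic, not merely equivalent.

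I expect the main obstacle to be step (1), associativity, together with the bookkeeping that the finite joins $\bigvee_{i,j}$ are legitimately formed. One has to check that the elements $a_ib_j$ are pairwise right-compatible and that iterated binary joins in $Y$ can be rearranged. I would first prove the general fact that if a family of elements of $Y$ is pairwise $\approx$, then the iterated join exists and is independent of bracketing, by induction using (E5) and (E6). Pairwise right-compatibility of the $a_ib_j$ follows from $a_ib_j\le ab$.
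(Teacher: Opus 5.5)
Your proposal is correct and follows the paper's route. In one direction you restrict to $\mathsf{Inv}(S)$; in the other you extend via $a\cdot y=\bigvee a_i\cdot y$, justified by the same three lemmas (existence of the join, independence of the decomposition, and (E7)), with both functors the identity on underlying sets and morphisms. You also fill in checks the paper leaves as ``almost immediate'', notably associativity via (B3) and (E6), and bracket-independence of finite pairwise-$\approx$ joins.
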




\begin{thebibliography}{99}


\bibitem{BW} M. Barr, Ch. Wells, {\em Category theory for computing science}, Prentice Hall, 1990.



\bibitem{EF} D. Easdown, D. G. FitzGerald, Presentations of factorizable inverse monoids, 
{\em Acta Sci. Math. (Szeged)} {\bf 71} (2005), 509--520.

\bibitem{F2010} J. Funk, B. Steinberg, The universal covering of an inverse semigroup,
{\em Appl. Categ. Structures} {\bf 18} (2010), 135--163.

\bibitem{G2024} R. Garner, Cartesian closed varieties I: the classification theorem, arXiv:2302.04402.

\bibitem{G2025} R. Garner, Cartesian closed varieties II: links to operator algebra, arXiv:2302.04403.

\bibitem{JS} M. Jackson, T. Stokes, Modal restriction semigroups: towards an algebra of functions,
{\em Int. J. Algebra Comput.} {\bf 21} (2011), 1053--1095.

tube
\bibitem{L1988} J. Lambek, P. J. Scott, {\em Introduction to higher order categorical logic}, CUP, Cambridge, 1986.

\bibitem{L2025} M. V. Lawson, The structure of \'etale Boolean right restriction monoids, arXiv:2404.08606.

\bibitem{LR2021} M. V. Lawson, P. Resende, Morita equivalence of pseudogroups, {\em J. Algebra} {\bf 586} 2021, 718--755.

\bibitem{S2011} B. Steinberg, Strong Morita equivalence of inverse semigroups,
{\em Houston J. Math.} {\bf 37} (2011), 895--927.




\end{thebibliography}
\end{document}